\newtheorem{theorem}{Theorem}[section]
\newtheorem{proposition}[theorem]{Proposition}
\theoremstyle{definition}
\newtheorem{definition}[theorem]{Definition}
\numberwithin{equation}{section}
\newtheorem{remark}[theorem]{Remark}
\title[A reduction theorem for good basic invariants]
{A reduction theorem for good basic invariants 
of finite complex reflection groups}
\author{Yukiko Konishi}
\address{ 
Department of Mathematics, 
College of Liberal Arts,
Tsuda University,
Toyko 187-8577, Japan 
}
\email{konishi@tsuda.ac.jp}
\author{Satoshi Minabe}
\address{Department of Mathematics, 
Tokyo Denki University, Tokyo 120-8551,  Japan}
\email{minabe@mail.dendai.ac.jp}
\keywords{Frobenius manifolds, Invariant theory, 
Coxeter groups, Complex reflection groups}
\subjclass[2020]{Primary 53D45; Secondary 20F55}
\begin{document}
\maketitle

\begin{abstract}
This is a sequel to our previous article \cite{KM2023}. We describe a certain reduction process
of Satake's good basic invariants. 
We show that 
if the largest degree $d_1$ of a finite complex reflection group $G$ is regular and if 
$\delta$ is a divisor of $d_1$, 
a set of good basic invariants of $G$ induces that of the reflection subquotient $G_{\delta}$.
We also show that
the potential vector field of a duality group $G$,
which gives the multiplication constants of the natural Saito structure on the orbit space, 
induces that of $G_{\delta}$. Several examples of this reduction process
are also presented.
\end{abstract}
%%%%%%%%%%%%
\section{Introduction}
%%%%%%%%%%%%
A  reflection on a complex vector space $V$ 
is a linear transformation of finite order
which fixes a hyperplane pointwise.
A finite subgroup $G$ of $\mathrm{GL}(V)$ is called 
a finite complex reflection group if it is generated by 
 reflections. It is well-known that such a group
is characterized by the property that the ring of polynomial 
invariants is freely generated, i.e. it is generated by a set of
algebraically independent homogeneous polynomials.
Such a set of generators is called a set of basic invariants.
The reader can find basic invariants in \cite{Mehta1988} for finite Coxeter groups,
\cite[\S 2.8]{LehrerTaylor} for imprimitive groups $G(m,p,n)$, 
\cite{ShephardTodd}\cite[\S 6.6]{LehrerTaylor} for primitive groups of rank two,
\cite[\S B.3]{OrlikTerao} and references therein for primitive groups of rank $\geq 3$.

As far as the authors know, there are two directions in pursuing distinguished sets of basic invariants.
The one direction, which we do not deal with, is the so-called canonical system.  See \cite{Iwasaki1997}, \cite{Iwasaki2002},
\cite{NakashimaTsujie2014}, \cite{NakashimaTeraoTsujie}, \cite{Talamini2018}.
The other direction is the flat generator system
proposed by Saito--Yano--Sekiguichi \cite{SaitoYanoSekiguchi} for finite Coxeter groups.
It is nowadays understood as a flat coordinate system of the structure of a Frobenius manifold
on the orbit space. See \cite{Saito1993}, \cite{Dubrovin1993-2}.   
Generalizations of these results to finite complex reflection groups 
are obtained by several groups of authors \cite{Arsie-Lorenzoni2016}, 
\cite{KatoManoSekiguchi2015}, \cite{KMS2018}.

Recently, Satake \cite{Satake2020}
introduced the notion of  good basic invariants for finite complex reflection groups.
For finite Coxeter groups, he showed that good basic invariants are flat 
in the sense of \cite{SaitoYanoSekiguchi}. Moreover he found a formula for 
multiplication constants of the Frobenius manifold structure on the orbit spaces.
In \cite{KM2023}, the authors showed that good basic invariants are flat coordinates 
of the natural Saito structures constructed in \cite{KMS2018} and gave a formula 
for the multiplication constants.

The aim of this paper is to describe a certain reduction process
of Satake's good basic invariants regarding  reflection subquotients
of finite complex reflection groups. The theory 
of the reflection subquotients was developed by Lehrer and Springer 
\cite{LehrerSpringer1}, \cite{LehrerSpringer2}.
Let $G$ be a finite complex reflection group.
If $E$ is the $\xi$-eigenspace of  a $\xi$-regular element of $G$, where $\xi$ is a primitive $\delta$-th
root of unity, then the subgroup
$N_E=\{s\in G\mid s(E)=E\}$ is a finite complex reflection group on $E$.
It is called a $\delta$-reflection subquotient of $G$ and denoted by $G_{\delta}$. See \S \ref{sec:definitions} for detail.
In this article, 
we show that a set of good basic invariants induces that of the reflection subquotient $G_{\delta}$
by a divisor $\delta$ of the largest degree $d_1$ of $G$ (Theorem \ref{thm:gbi}).
We also show that the potential vector field of a duality $G$,
which gives the multiplication constants of the natural Saito structure \cite{KM2023},
induces that of $G_{\delta}$ (Thoerem \ref{thm:pvf}).
The key to our arguments is the notion of an admissible triplet introduced by Satake \cite{Satake2020}.
An admissible triplet $(g,\zeta,q)$ consists of a $d_1$-th primitive root of unity
$\zeta$, 
a $\zeta$-regular element $g$ and a $\zeta$-regular eigenvector $q$ of $g$.
If a positive integer $\delta$ is a divisor of $d_1$ and $E$ is the $\zeta^{d_1/\delta}$-eigenspace of $g^{d_1/\delta}$, the corresponding reflection subquotient $G_{\delta}$
 has the same largest degree $d_1$ as $G$, and 
$(g|_E,\zeta,q)$ is an admissible triplet of $G_{\delta}$.
Hence there exists a sequence of reflection subquotients with the same $d_1$.
For example, for imprimitive groups, we have
\begin{equation}\label{monomial-sequence}
G(m,m,n+1)\stackrel{m}{\longrightarrow}G(m,1,n)\stackrel{k}{\longrightarrow} 
G\left(km,1,\frac{n}{k}\right)\quad (d_1=nm)~.
\end{equation}
Here in the first arrow, it is assumed that $n+1$ is not divisible by $m$
and in the second arrow, $k$ is a divisor of $n$.
As for primitive groups, we list two sequences  in Figures \ref{fig1} and \ref{fig2}.
\begin{figure}[t]
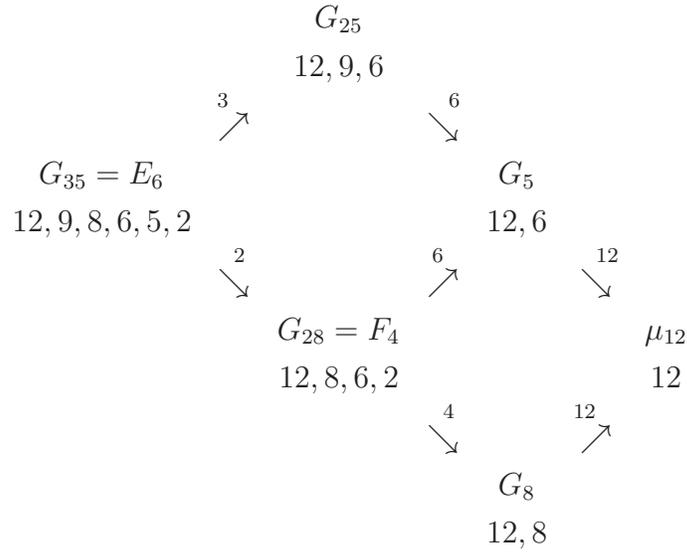

\begin{equation}\nonumber
\begin{array}{ccccccc}
&&G_{25}&&&&\\
&&12,9,6&&&&\\
&\stackrel{3\phantom{23}}{\nearrow}&&\stackrel{\phantom{23}6}{\searrow}&&&\\
G_{35}=E_6&&&&G_5&&\\
12,9,8,6,5,2&&&&12,6&&\\
&\stackrel{\phantom{3}2}\searrow&&\stackrel{6\phantom{2}}{\nearrow}&
&\stackrel{\phantom{23}12}{\searrow}&\\
&&G_{28}=F_4&&&&\mu_{12}\\
&&12,8,6,2&&&&12\\
&&&\stackrel{\phantom{2}4}{\searrow}&&\stackrel{\!\!\!12\phantom{2}}{\nearrow}&\\
&&&&G_8&&\\
&&&&12,8&&\\
\end{array}
\end{equation}
\caption{The sequence of reflection subquotients of $G_{35}=E_6$.
All those six groups  are duality groups with $d_1=12$.
The numbers written under each group are the degrees of that group. }
\label{fig1}
\end{figure}
\begin{figure}[t]
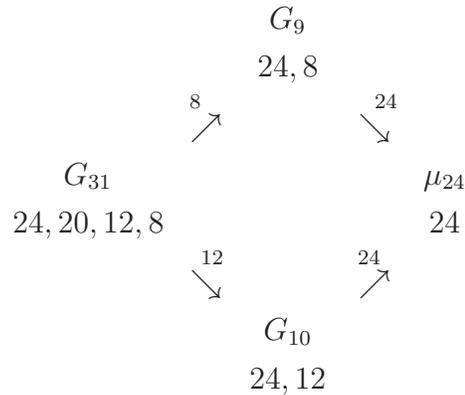

\begin{equation}\nonumber
\begin{array}{ccccccc}
&&G_{9}&&&&\\
&&24,8&&&&\\
&\stackrel{8\phantom{23}}{\nearrow}&&\stackrel{\phantom{23}24}{\searrow}&&&\\
G_{31}&&&&\mu_{24}&&\\
24,20,12,8&&&&24&&\\
&\stackrel{\phantom{3}12}\searrow&&\stackrel{24\phantom{2}}{\nearrow}&
&&\\
&&G_{10}&&&&\\
&&24,12&&&&\\
\end{array}
\end{equation}
\caption{The sequence of reflection subquotients of $G_{31}$.
Note that $G_{31}$ is not a duality group while
the other three groups   are duality groups with $d_1=24$.
}\label{fig2}
\end{figure}

The article is organized as follows.
We first recall the definitions of regular elements, reflection subquotients,
admissible triplets and good basic invariants in \S \ref{sec:definitions}.
In  \S \ref{sec:reduction-theorem}, we prove the above results.
The rest of the article (\S 4 --\S 6)
is devoted to the examples depicted
in \eqref{monomial-sequence} and Figures \ref{fig1} and \ref{fig2}.

\subsection*{Note Added}
After this article was submitted, a referee kindly pointed out that
there is a work \cite{Slodowy} by Slodowy in which 
some sequences of reflection subquotients appear 
in relation to deformations of simple singularities. 
The reduction sequence depicted in Figure \ref{fig1}
is compatible with the list of reflection subquotients of $E_6$ in \cite[Fig.\,6]{Slodowy}.
As for $E_8$ and $E_7$, reflection subquotients  are listed in Fig.\,4 and Fig.\,5 in \cite{Slodowy} respectively.
It would be nice if we also have concrete descriptions of good basic invariants 
for the reduction sequences which include $E_8$ and $E_7$.
We hope to address this problem in a future publication. 

\subsection*{Acknowledgment}
%%%%%%%%%%%%%%%%%%
The work of Y.K.  is supported in part by
KAKENHI Kiban-S (16H06337) and KAKEHI Kiban-S (21H04994).
S.M. is supported in part by
KAKENHI Kiban-C (23K03099). 
Y.K. and S.M. thank Ikuo Satake and Atsushi Takahashi for valuable discussions and comments.
The authors thank the referee for useful comments and for bringing the reference \cite{Slodowy} to their attention.  
%%%%%%%%%%%%%%%%%%%%%%%%%%%%%%%%%%%%%%%%%%%
\section{Regular elements, reflection subquotients, admissible triplets and good basic invariants}
\label{sec:definitions}
%%%%%%%%%%%%%%%%%%%%%%%%%%%%%%%%%%%%%%%%%%
\subsection{Notations}
In this section,
 $G$ denotes a finite complex reflection group acting on a complex vector space $V$
of dimension $n$.
The degrees of $G$  are denoted $d_1,\ldots, d_n$
and the codegrees of $G$ are denoted $d_1^*,\ldots, d_n^*$. 
Unless otherwise stated,
we assume that the degrees are  in the descending order and 
the codegrees are in the ascending order:
$$
d_1\geq d_2\geq \ldots \geq d_n,\quad
d_1^*(=0)\leq d_2^*\leq \ldots,\leq d_n^*~.
$$
We also assume that any set of basic invariants $x_1,\cdots, x_n$ is
taken so that $\mathrm{deg}\, x_{\alpha}=d_{\alpha}$.

For a positive integer $d$, we set
\begin{equation}\nonumber
\mathfrak{a}(d):=\#\{{\alpha}\mid \text{$d$ divides $d_{\alpha}$}\}
~,\quad
\mathfrak{b}(d):=\#\{{\alpha}\mid \text{$d$ divides $d_{\alpha}^*$}\}.
\end{equation}
We also put 
$$
\mathfrak{i}(d)=\{\alpha\in \{1,2,\ldots,n\}\mid \text{$d$ divides $d_{\alpha}$}
\}
,\quad
\mathfrak{i}^c(d)=\{1,2,\ldots,n\}\setminus\mathfrak{i}(d).
$$

For $g\in GL(V)$ and $\zeta \in \mathbb{C}$, $V(g, \zeta)$ denotes
the $\zeta$-eigenspace of $g$ in $V$. 

%%%%%%%%%%%%%%%%%%%
\subsection{Regular elements}
\label{sec:regularelements}
%%%%%%%%%%%%%%%%%%%%%%%%%%%%
In this subsection, we recall  the definitions of regular vector, regular element
and the existence theorem for regular elements \cite{Springer} 
\cite[Ch.11]{LehrerTaylor}.

\begin{definition}
\begin{enumerate}
\item[(i)] A vector $q\in V$ is {\it regular} if it lies on no reflection hyperplanes of $G$.
\item[(ii)] An element $g\in G$ is {\it regular} if $g$ has an eigenspace 
$V(g, \zeta)$ which contains a regular vector. In this case, 
we say that $g$ is a $\zeta$-regular element.
If $\zeta$ is a primitive $d$-th root of unity,
then we also say that $g$ is a $d$-regular element. 
\end{enumerate}
\end{definition}

\begin{remark}\label{remark-regularelements}
\begin{enumerate}\item
It is known that the necessary and sufficient condition for the existence of 
a $d$-regular element of $G$ is $\mathfrak{a}(d)=\mathfrak{b}(d)$.
See \cite[Theorem 11.28]{LehrerTaylor}. 
\item
Eigenvalues of a $\zeta$-regular element $g$ are
$\zeta^{1-d_{\alpha}}$ $(1\leq \alpha\leq n)$.
In particular, $\zeta^{d_{\alpha}}=1$ for some $\alpha$
\cite[Theorem 11.56]{LehrerTaylor}.
\end{enumerate}
\end{remark}

%%%%%%%%%%%%%%%%%%%%%%
\subsection{Reflection subquotients in the regular cases}
%%%%%%%%%%%%%%%%%%%%%
We collect necessary facts on reflection subquotient.

For a subspace $E\subset V$, 
we put
\begin{equation}
\begin{split}
N_E&=\{s\in G\mid s(E)\subset E\},
\quad
C_E=\{s\in G\mid s(v)=v \text{ for all } v\in E\}.
\end{split}
\end{equation}
The map from $N_E$ to $GL(E)$ obtained by
restricting the action of each element $s\in N_E$ to $E$ is denoted $\pi_E$.
 The surjective homomorphism
 $S[V]\to S[E]$ of polynomial rings induced by the inclusion $E\to V$ is denoted $\phi_E$.

The following results can be found in \cite[Lemma 11.14, Corollary 11.17, Theorem 11.24,
Theorem 11.33, Theorem 11.38, Theorem 11.39]{LehrerTaylor}.

\begin{proposition}\label{LT}
Let $G$ be a finite complex reflection group.
Let $\delta$ be a regular number for $G$, i.e.
a positive integer satisfying $\mathfrak{a}(\delta)=\mathfrak{b}(\delta)$.
Let $h\in G$ be a $\xi$-regular element with respect to a $\delta$-th primitive root of unity $\xi$ and
let $E=V(h,\xi)$. 
Then the followings hold.
\begin{enumerate}
\item $\dim E=\mathfrak{a}(\delta)$.
\item 
$N_E=\{s\in G\mid sh=hs\}$.
\item $C_E=\{1\}$. Thus $\pi_E(N_E)$ acts faithfully on $E$. 

\item 
$\pi_E(N_E)$ is  a finite complex reflection group on $E$. 
Its reflection hyperplanes are the intersections with $E$
of the reflection hyperplanes of $G$ not containing $E$. 

\item 
If $x=(x_1,\ldots, x_n)$ is a set of basic invariants of $G$,
then $\phi_E (x_{\alpha})=0$ for $\alpha\in \mathfrak{i}^c(\delta)$.
Moreover $\phi_E(x_{\alpha})$ $({\alpha}\in \mathfrak{i}(\delta))$ form a set of basic invariants
of $\pi_E(N_E)$. In particular,  the degrees of $\pi_E(N_E)$
are $d_{\alpha}~~(\alpha\in \mathfrak{i}(\delta))$.

\item \label{irred}
If $G$ is an irreducible finite complex reflection group, so is $\pi_E(N_E)$.
\item
The codegrees of $\pi_E(N_E)$ are those  of $G$ which are divisible by $\delta$.
\end{enumerate}
\end{proposition}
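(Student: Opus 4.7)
The plan is to derive the seven parts from Springer's theory of regular elements, Steinberg's theorem on stabilizers of vectors, and the Shephard--Todd--Chevalley criterion, following the approach of Lehrer--Springer. The key technical inputs are that $h|_E$ is multiplication by $\xi$ and that $E$ contains a regular vector $q$. For (1), Remark \ref{remark-regularelements}(2) gives the eigenvalues of $h$ as $\xi^{1-d_\alpha}$, so $\dim E = \#\{\alpha : \xi^{d_\alpha}=1\} = \mathfrak{a}(\delta)$. For (2), the containment of the centralizer in $N_E$ is immediate; conversely, for $s\in N_E$, both $s^{-1}hs$ and $h$ act as $\xi\cdot\mathrm{id}$ on $E$ and hence agree on $q$, so $h^{-1}s^{-1}hs$ lies in the pointwise stabilizer $G_q$, which equals $\{1\}$ by Steinberg's theorem since $q$ is regular. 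Statement (3) is the same application of Steinberg to any $s\in C_E$.

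For (5), $G$-invariance of $x_\alpha$ gives, for $v\in E$,
\[
\phi_E(x_\alpha)(v) = x_\alpha(v) = x_\alpha(h^{-1}v) = \xi^{-d_\alpha}\,\phi_E(x_\alpha)(v),
\]
so $\phi_E(x_\alpha)=0$ whenever $\delta \nmid d_\alpha$. The remaining $\mathfrak{a}(\delta)$ polynomials $\phi_E(x_\alpha)$ with $\alpha\in\mathfrak{i}(\delta)$ are $\pi_E(N_E)$-invariants of the right cardinality, and algebraic independence will follow from a Jacobian computation at $q$. In coordinates $y_1,\ldots,y_n$ on $V$ diagonalizing $h$ with $y_1,\ldots,y_m$ spanning $E^*$ (where $m=\mathfrak{a}(\delta)$), the full Jacobian of $(x_1,\ldots,x_n)$ is, up to scalar, the product of linear forms defining the reflection hyperplanes, hence nonzero at $q$; a weight count under $h$ forces $(\partial x_\alpha/\partial y_\beta)(q)=0$ when $\alpha\in\mathfrak{i}^c(\delta)$ and $\beta\le m$, so nonvanishing of the total Jacobian forces nonsingularity of the $\mathfrak{i}(\delta)\times\{1,\ldots,m\}$ sub-block, which is precisely the Jacobian of $(\phi_E(x_\alpha))_{\alpha\in\mathfrak{i}(\delta)}$ on $E$ at $q$. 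This supplies a polynomial subring of $S[E]^{\pi_E(N_E)}$ of Krull dimension $\dim E$, and a Hilbert series comparison gives equality; then (4) follows from the converse of Shephard--Todd--Chevalley.

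For (6), any nontrivial $\pi_E(N_E)$-stable decomposition of $E$ would yield a $\pi_E(N_E)$-invariant linear form on $E$, which by (5) would lift to a homogeneous $G$-invariant of degree $1$, contradicting irreducibility of $G$. For (7), the codegrees are read off from the graded character of the coinvariant algebra twisted by $\det$, and restricting the resulting Molien-type identity to the cyclic subgroup $\langle h\rangle$ selects exactly the codegrees with $\delta \mid d_\alpha^*$ as those of $\pi_E(N_E)$. I expect the main obstacle to be the Jacobian nonvanishing step in (5), where the block decomposition under the $h$-action must be set up carefully; once (4) and (5) are established, the remaining items reduce to standard equivariant invariant theory.
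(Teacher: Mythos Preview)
The paper does not give its own proof of this proposition: it is quoted wholesale from Lehrer--Taylor (their Lemma 11.14, Corollary 11.17, Theorems 11.24, 11.33, 11.38, 11.39), so there is no in-paper argument to compare against. Your outline follows the Springer/Lehrer--Springer route that underlies those references, and parts (1)--(5) and (7) are essentially correct. Two minor remarks: in (5) the phrase ``Hilbert series comparison'' is underspecified, since you do not yet know the Hilbert series of $S[E]^{\pi_E(N_E)}$; the standard way to close the inclusion $\mathbb{C}[\phi_E(x_\alpha):\alpha\in\mathfrak{i}(\delta)]\subseteq S[E]^{\pi_E(N_E)}$ is Springer's order formula $|C_G(h)|=\prod_{\alpha\in\mathfrak{i}(\delta)}d_\alpha$ combined with a field-degree and integral-closure argument. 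Also, the Shephard--Todd--Chevalley converse yields only that $\pi_E(N_E)$ is generated by reflections; the asserted description of its reflecting hyperplanes in (4) needs a separate (short) argument you have not supplied.

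There is, however, a genuine gap in your treatment of (6). The implication ``a nontrivial $\pi_E(N_E)$-stable decomposition of $E$ yields a $\pi_E(N_E)$-invariant linear form on $E$'' is false in general: a reducible representation need not contain a one-dimensional trivial summand. For instance, $\mathfrak{S}_2\times\mathfrak{S}_2$ acting by the sign character on each factor of $\mathbb{C}\oplus\mathbb{C}$ is reducible but has no nonzero invariant linear form. Your second step---that a degree-$1$ basic invariant of $\pi_E(N_E)$ would, via (5), force some $d_\alpha=1$ with $\delta\mid d_\alpha$---is correct, but it is never triggered because the first implication does not hold. The irreducibility statement (Lehrer--Taylor, Theorem 11.38) requires a genuinely different mechanism, and you should consult that reference rather than attempt to reduce it to the nonexistence of linear invariants.
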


\begin{remark}
It is known that the triviality of $C_E$  is equivalent to the regularity of $E$ (\cite[Lemma 11.22]{LehrerTaylor}).
Even if $E=V(h, \xi)$ is not regular, it is known that
if $E=V(h, \xi)$ is a maximal $\xi$-subspace 
then $\pi_E(N_E)\cong N_E/C_E$ is a complex reflection group
on $E$. See \cite[Ch 11]{LehrerTaylor} and the references therein.
By this reason, the reflection group $\pi_E(N_E)$ in Proposition \ref{LT} 
is called {\it a reflection subquotient}, although we only treat the cases 
of regular $E$ in the rest of the article.
\end{remark}

If both $\xi,\tilde{\xi}$ are $\delta$-th primitive roots of unity,
the reflection subquotients by 
a $\xi$-regular element $h$ and that by a $\tilde{\xi}$-regular element  $\tilde{h}$
are conjugate \cite[Proposition 11.18]{LehrerTaylor}. 
So the reflection subquotient $\pi_E(N_E)$ is uniquely determined 
up to conjugacy by the regular number $\delta$. 
It is denoted $G_{\delta}$ and called  {\it the $\delta$-reflection subquotient} in this article.

%%%%%%%%%%%%%%%%%%%%%%%%%%%%%%%%%
\subsection{Admissible triplet and good basic invariants}
%%%%%%%%%%%%%%%%%%%%%%%%%%%%%%%%%
In this subsection, we recall the definitions of admissible triplets and good basic invariants
 introduced by Satake \cite{Satake2020}.
\begin{definition}
A triple $(g,\zeta,q)$ consisting of 
\begin{itemize}
\item a primitive $d_1$-th root of unity $\zeta$
(where $d_1$ is the largest degree of $G$),
\item a $\zeta$-regular element $g\in G$,
\item a regular vector $q$ satisfying $gq=\zeta q$,
\end{itemize}
is called {\it an admissible triplet} of $G$.
\end{definition}
As stated in \S \ref{sec:regularelements},
the necessary and sufficient condition
for the existence of an admissible triplet is $\mathfrak{a}(d_1)=\mathfrak{b}(d_1)$.

\begin{definition}
Let $(g,\zeta,q) $ be an admissible triplet of $G$.
We say that  a linear coordinate system $z=(z_1,\ldots, z_n)$  of $V$ is a $(g,\zeta)$-graded coordinate system if 
$$
g^* z_{\alpha}=\zeta^{d_{\alpha}-1} z_{\alpha} \quad (1\leq \alpha\leq n)
$$
holds.
\end{definition}
Given a $\zeta$-regular element $g$,
the eigenspace decomposition gives a basis of $V$ consisting of 
eigenvectors of $g$. If one takes the associated coordinate system, 
it is $(g,\zeta)$-graded.

\begin{definition}
Let $(g,\zeta,q)$ be an admissible triplet of $G$
and let $z=(z_1,\ldots, z_n)$  be a $(g,\zeta)$-graded coordinate system.
We say that a set of basic invariants $x=(x_1,\ldots, x_n)$ is {\it compatible} 
at $q$ with $z$ if
\begin{equation}\label{compatible-x0}
\frac{\partial x_{\alpha}}{\partial z_{\beta}}(q)=\begin{cases}
1&(\alpha=\beta)
\\
0&(\alpha\neq \beta)\end{cases}
\quad (1\leq \alpha,\beta\leq n)~.
\end{equation}
\end{definition}
For any admissible triplet $(g,\zeta,q)$ and any  $(g,\zeta)$-graded coordinate system $z$,
there exists a set of basic invariants  which  is compatible with $z$  at $q$.

For $1\leq \alpha\leq n$, we set
\begin{equation}\label{defIalpha}
\begin{split}
I_{\alpha}^{(0)}&=\{
(a_1,\ldots,a_n)\in \mathbb{Z}_{\geq 0}^n\mid 
a_1d_1+\cdots+a_nd_n
=d_{\alpha},~~a_1+\cdots+a_n\geq 2
\}
~.
\end{split}\end{equation}

\begin{definition}\label{def-good} 
A set of basic invariants $x=(x_1,\ldots, x_n)$ of $G$
is {\it good} with respect to an admissible triplet $(g,\zeta,q)$
if
\begin{equation}\label{good0}
\frac{\partial^{a_1+\cdots+a_n} x_{\alpha}}{\partial z_1^{a_1}\cdots\partial z_n^{a_n}} (q)
=0 \quad (1\leq \alpha\leq n,~~ (a_1,\ldots,a_n)\in I_{\alpha}^{(0)}),
\end{equation}
where $z=(z_1,\ldots, z_n)$ is a $(g,\zeta)$-graded coordinate system of $V$.
\end{definition}
\begin{remark}\label{remark:uniqueness-gbi}
The definition of good basic invariants does not depend on the choice of
the graded coordinate system
in the sense that
any two sets of good basic invariants with respect to the same admissible triplet
are $\mathbb{C}$-linear combinations of each other \cite{Satake2020, KM2023}.
Moreover,  if $\mathfrak{a}(d_1)=\mathfrak{b}(d_1)=1$, 
the set of good basic invariants does not depend on the choice of admissible triplet
in the sense that
any two sets of good basic invariants with respect to (possibly different) admissible triplets
are $\mathbb{C}$-linear combinations of each other.
\end{remark}

%%%%%%%%%%%%%%%%%
\section{Reduction Theorem}\label{sec:reduction-theorem}
%%%%%%%%%%%%%%%%%

%%%%%%%%%%%%%%%%%%%%%%%%%%%%%%%
\subsection{Admissible triplet and reflection subquotients}
\label{sec:at-rq}
%%%%%%%%%%%%%%%%%%%%%%%%%%%%%%%
In this section,
$G$ is a finite complex reflection group satisfying $\mathfrak{a}(d_1)=\mathfrak{b}(d_1)$
and $(g,\zeta,q)$ is an admissible triplet of $G$.
Let $\delta>1$ be a divisor of $d_1$.
Then
$\zeta^{d_1/\delta}$ is a  primitive $\delta$-th root of unity,
and $g^{d_1/\delta}$ is $\zeta^{d_1/\delta}$-regular.
So we can apply Proposition \ref{LT} to this setting.
Let $E=V(g^{d_1/\delta},\zeta^{d_1/\delta})$
and let 
$N_E$, $\pi_E$, $\phi_E$ be as defined in Proposition \ref{LT}.

Then  next statements easily follow from Proposition \ref{LT}.
\begin{itemize}
\item $G_{\delta}:=\pi_E(N_E)$ is a reflection group on $E$.
\item The highest degree of $G_{\delta}$ is $d_1$. 
\item  $q\in E$ since $gq=\zeta q$ implies $g^{d_1/\delta}q=\zeta^{d_1/\delta}q$.
\item $g\in N_E$ since  $g$ commutes with $g^{d_1/\delta}$.
\item $(\pi_E(g),\zeta,q)$ is an admissible triplet  of $G_{\delta}$.
\end{itemize}

%%%%%%%%%%%%%%%%%%%%%%%%%%%%%%%%%%%%%%%%%%%%%%
\subsection{Good basic invariants of $G_{\delta}$}
%%%%%%%%%%%%%%%%%%%%%%%%%%%%%%%%%%%%%%%%%%%%%%%
\begin{theorem}\label{thm:gbi}
\begin{enumerate}
\item If $z=(z^1,\ldots, z^n)$ is a $(g,\zeta)$-graded coordinate system of $V$,
$\phi_E (z_{\alpha})=0$ $(\alpha\in \mathfrak{i}^c(\delta))$ on $E$.
Moreover
$\phi_E(z_{\alpha})$ $(\alpha\in \mathfrak{i}(\delta))$ form a
$(\pi_E(g),\zeta)$-graded coordinate system of $E$.
In other words, the ring homomorphism $\phi_E:S[V]\to S[E]$ is given by
\begin{equation}\label{ring-hom}
S[V]\cong \mathbb{C}[z_1,\ldots, z_n]\longrightarrow
S[E]\cong \mathbb{C}[z_1,\ldots,z_n]/(z_{\alpha}\mid  \alpha\in \mathfrak{i}^c(\delta))~.
\end{equation}

\item If a set $x=(x_1,\ldots, x_n)$ of basic invariants of $G$ is 
compatible at $q$ with a $(g,\zeta)$-graded coordinate system $z$ of $V$,
then 
the set of basic invariants $\phi_E(x_{\alpha})$ $(\alpha\in \mathfrak{i}(\delta))$ of $G_{\delta}$
is compatible at $q$ with  
$\phi_E(z_{\alpha})$ $(\alpha\in \mathfrak{i}(\delta))$.

\item  If a set $x=(x^1,\ldots, x^n)$ of basic invariants of $G$ is 
good with respect to $(g,\zeta,q)$, 
then the set of basic invariants $\phi_E(z_{\alpha})$ $(\alpha\in \mathfrak{i}(\delta))$ of 
$G_{\delta}$
is good with respect to 
$(\pi_E(g),\zeta,q)$.

\end{enumerate}
\end{theorem}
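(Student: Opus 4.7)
The overall strategy is to reduce part (1) to a linear-algebraic identification of $E$ inside the eigenbasis of $g$ on $V$, after which parts (2) and (3) will follow because, in graded coordinates, $\phi_E$ is simply the projection ``set $z_\gamma = 0$ for $\gamma \in \mathfrak{i}^c(\delta)$'' and therefore commutes with partial differentiation in the remaining variables.

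For part (1), by Remark \ref{remark-regularelements}(2) a $(g,\zeta)$-graded coordinate system $(z_1,\ldots,z_n)$ is dual to a basis $(e_1,\ldots,e_n)$ of $V$ with $g e_\beta = \zeta^{1-d_\beta} e_\beta$. Then
\begin{equation*}
g^{d_1/\delta} e_\beta = \zeta^{(1-d_\beta) d_1/\delta} e_\beta,
\end{equation*}
which equals $\zeta^{d_1/\delta} e_\beta$ precisely when $\zeta^{d_\beta d_1/\delta} = 1$, i.e.\ when $\delta \mid d_\beta$. Hence $E$ is spanned by $\{e_\beta : \beta \in \mathfrak{i}(\delta)\}$, so restricting the dual basis gives $\phi_E(z_\alpha) = 0$ for $\alpha \in \mathfrak{i}^c(\delta)$, while $\{\phi_E(z_\alpha) : \alpha \in \mathfrak{i}(\delta)\}$ is a linear coordinate system on $E$. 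Applying $\phi_E$ to the identity $g^* z_\alpha = \zeta^{d_\alpha-1} z_\alpha$, and using the intertwining $\pi_E(g)^* \phi_E = \phi_E g^*$, yields $\pi_E(g)^* \phi_E(z_\alpha) = \zeta^{d_\alpha - 1} \phi_E(z_\alpha)$ for $\alpha \in \mathfrak{i}(\delta)$; since the largest degree of $G_\delta$ is $d_1$ by \S\ref{sec:at-rq} and its degrees are $\{d_\alpha : \alpha \in \mathfrak{i}(\delta)\}$ by Proposition \ref{LT}(5), this is exactly the $(\pi_E(g),\zeta)$-graded condition on $E$.

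For parts (2) and (3), the key technical point is that the explicit description \eqref{ring-hom} of $\phi_E$ gives the commutation
\begin{equation*}
\frac{\partial \phi_E(f)}{\partial \phi_E(z_\beta)} = \phi_E\!\left(\frac{\partial f}{\partial z_\beta}\right) \quad (\beta \in \mathfrak{i}(\delta),\ f \in S[V]),
\end{equation*}
which iterates to arbitrary mixed partials provided all differentiated variables have indices in $\mathfrak{i}(\delta)$. Since $q \in E$, evaluation at $q$ agrees on the two sides. For part (2), this immediately gives $\partial \phi_E(x_\alpha)/\partial \phi_E(z_\beta)(q) = \partial x_\alpha/\partial z_\beta(q) = \delta_{\alpha\beta}$ for $\alpha,\beta \in \mathfrak{i}(\delta)$, and the $\phi_E(x_\alpha)$ form basic invariants of $G_\delta$ by Proposition \ref{LT}(5). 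For part (3), any tuple $(a_\beta)_{\beta \in \mathfrak{i}(\delta)}$ in the index set defining goodness for $G_\delta$ (with $\sum a_\beta d_\beta = d_\alpha$ and $\sum a_\beta \geq 2$) extends by zeros on $\mathfrak{i}^c(\delta)$ to a tuple in $I_\alpha^{(0)}$ for $G$, so the assumed vanishing of the corresponding mixed partial of $x_\alpha$ at $q$ transfers directly to the vanishing of the corresponding mixed partial of $\phi_E(x_\alpha)$ at $q$.

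The only nontrivial step is the eigenvalue computation that pins down $E$ as the coordinate subspace $\{z_\gamma = 0 : \gamma \in \mathfrak{i}^c(\delta)\}$; once that identification is made, parts (2) and (3) follow from the formal fact that differentiation and restriction to this coordinate subspace commute in the surviving variables.
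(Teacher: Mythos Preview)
Your proof is correct and follows essentially the same approach as the paper: the eigenvalue computation identifying $E$ with the span of $\{e_\beta : \beta \in \mathfrak{i}(\delta)\}$ is identical, and parts (2) and (3) proceed in both cases via the commutation of $\phi_E$ with partial differentiation in the surviving variables together with the zero-extension of multi-indices from $\mathfrak{i}(\delta)$ into $I_\alpha^{(0)}$. Your write-up is slightly more explicit than the paper in justifying the $(\pi_E(g),\zeta)$-graded condition via the intertwining $\pi_E(g)^*\phi_E = \phi_E g^*$, but the substance is the same.
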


\begin{proof}
(1)  
The basis  associated to  the  $(g,\zeta)$-graded coordinate system $z$ consists of  eigenvectors 
$q_1,\ldots,q_{n}$ of $g$ satisfying
$gq_{\alpha}=\zeta^{1-d_{\alpha}}q_{\alpha}$. 
Then $q_{\alpha}\in E$ if and only if  
$\zeta^{(1-d_{\alpha})d_1/\delta}=\zeta^{d_1/\delta}$, or
$\zeta^{d_1d_{\alpha}/\delta}=1$. This condition is equivalent to the condition that $ d_{\alpha}$ is divisible by $\delta$.
Therefore 
\begin{equation}\label{E-z}
E=\bigoplus_{\alpha\in \mathfrak{i}(\delta)} \mathbb{C}q_{\alpha}~.
\end{equation}
This implies that  $\pi_E(z_{\alpha})$
($\alpha\in \mathfrak{i}(\delta)$)
 make a coordinate system of $E$ and that the coordinate system is $(\pi_E(g),\zeta)$-graded.
Eq.\eqref{E-z} also implies that
$z_{\alpha}=0$ on $E$ for $\alpha\in \mathfrak{i}^c(\delta)$.
So the subspace $E$ is the zero set of $z_{\alpha}$ ($\alpha\in \mathfrak{i}^c(\delta)$)
and  \eqref{ring-hom} follows.

In the proofs of (2)(3), we identify $\phi_E(z_{\alpha})$ ($\alpha\in \mathfrak{i}(\delta)$) with $z_{\alpha}$ to simplify the notation.
 \\
 (2) First recall from Theorem \ref{LT} that  $\phi_E(x_{\alpha})$ $(\alpha\in \mathfrak{i}(\delta))$ is a
 set of basic invariants of $G_{\delta}$.

 Notice that for $f\in S[V]$ and $\alpha\in \mathfrak{i}(\delta)$,
 \begin{equation}\label{phi-q-commutes}
 f(q)=\phi_E(f) (q),\quad \phi_E\left(\frac{\partial f}{\partial z_{\alpha}}\right)=\frac{\partial \phi_E(f)}{\partial z_{\alpha}}
 \end{equation}
 hold.
Therefore 
$$
\frac{\partial \, \phi_E (x_{\beta})}{\partial z_{\alpha}}(q) =
\frac{\partial x_{\beta}}{\partial z_{\alpha}}(q)=
\begin{cases}
1&(\alpha=\beta)\\
0&(\alpha\neq \beta)
\end{cases}
\qquad (\alpha,\beta\in\mathfrak{i}(\delta))
$$
holds by the compatibility of $x$ with $z$.
\\
(3)  Let us set 
\begin{equation}\nonumber
I_{\alpha}^{(0)}(\delta)=\{(a_1,\ldots a_n)\in I_{\alpha}^{(0)}\mid a_{\mu}=0 \text{ for all }\mu \in \mathfrak{i}^c(\delta)\} \quad (1\leq \alpha\leq n)~.
\end{equation}
For $\beta\in \mathfrak{i}(\delta)$ and  $(a_1,\ldots, a_n)\in I_{\alpha}^{(0)}(\delta)$,
$$
\frac{\partial^{a_1+\cdots+a_n} \, \phi_E (x_{\beta})}{\partial z_1^{a_1}
\cdots \partial z_n^{a_n}}(q) 
=\frac{\partial^{a_1+\cdots+a_n} \, x^{\beta}}{\partial z_1^{a_1}\cdots \partial z_n^{a_n}}(q)
$$
holds because  of \eqref{phi-q-commutes}. Given that $x$ is good, it is immediate to see that the right-hand-side vanishes.
\end{proof}

%%%%%%%%%%%%%%%%%%%%%%%%%%%%%
\subsection{Potential vector field of $G_{\delta}$}
%%%%%%%%%%%%%%%%%%%%%%%%%%%%%%%
A duality group is 
an irreducible finite complex reflection group  satisfying the relation 
\cite[\S B4]{OrlikTerao} \cite[\S 12.6]{LehrerTaylor}
\begin{equation}\label{duality-degree}
d_{\alpha}+d_{\alpha}^*=d_1\quad (1\leq \alpha\leq n).
\end{equation}
It is known that $\mathfrak{a}(d_1)=\mathfrak{b}(d_1)=1$ holds
for all duality groups.

 In this subsection, first we recall the potential vector field of the natural Saito structure 
 for duality groups \cite[\S 7.5]{KMS2018}.
 \begin{definition}\label{def:pvf}
 Let $(g,\zeta,q)$ and $z=(z_1,\ldots, z_n)$ be an admissible triplet of 
 a duality group $G$ acting on $V$ and  a $(g,\zeta)$-graded coordinate  system of  $V$.
Let $x=(x_1,\ldots, x_n)$ be a  set  of good basic invariants with  respect to 
$(g,\zeta,q)$. It is also assumed that $x$ is compatible with $z$ at $q$.
The potential vector field of $G$ is an $n$-tuple $(\mathcal{G}_1,\ldots,\mathcal{G}_n)$ 
of the $G$-invariant polynomials
\begin{equation}\label{eq:pvf}
\mathcal{G}_{\gamma}=
\frac{z_1(q)}{d_{\gamma}-1}
\sum_{(a_1,\ldots, a_n)\in I_{\gamma}^{(1)}} 
\frac{\partial^{a_1+\cdots+a_n} x_{\gamma}}
{\partial z_1^{a_1}\cdots \partial z_n^{a_n}} (q) \cdot \frac{x_1^{a_1}\cdots x_n^{a_n}}{a_1!\cdots a_n!}
\quad (1\leq \gamma\leq n)~.
\end{equation}
Here $$
I^{(1)}_{\gamma}=\{(a_1,\ldots, a_n)\in \mathbb{Z}^n_{\geq 0}\mid 
a_1d_1+\cdots+a_nd_n=d_{\gamma}+d_1, ~~a_1+\cdots+a_n\geq 2\}~.
$$
\end{definition}

Let $G$ be a duality group and $\delta>1$ be a divisor of $d_1$.
Then notice that the reflection subquotient $G_{\delta}$ is also a duality group.
\begin{theorem}\label{thm:pvf}
Using the notations of \S \ref{sec:at-rq} and Definition \ref{def:pvf},
we have  the followings.
\begin{enumerate}
\item $\phi_E(\mathcal{G}^{\gamma})=0$ for $\gamma\in \mathfrak{i}^c(\delta)$.
\item $\phi_E(\mathcal{G}^{\gamma})$  $(\alpha\in \mathfrak{i}(\delta))$ 
form a potential vector field of $G_{\delta}$.
\end{enumerate}
\end{theorem}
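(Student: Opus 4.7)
The plan is to apply $\phi_E$ term-by-term to formula \eqref{eq:pvf} and identify which multi-indices in $I_\gamma^{(1)}$ give nonvanishing contributions, then match what remains with \eqref{eq:pvf} written for $G_\delta$. First I would check that Definition \ref{def:pvf} actually applies to $G_\delta$. Irreducibility of $G_\delta$ is Proposition \ref{LT}(6); its degrees are $\{d_\alpha \mid \alpha \in \mathfrak{i}(\delta)\}$ by Proposition \ref{LT}(5) and its codegrees are the $d_\alpha^*$ divisible by $\delta$ by Proposition \ref{LT}(7). Since $\delta \mid d_1$, the duality relation $d_\alpha + d_\alpha^* = d_1$ for $G$ forces $\delta \mid d_\alpha$ iff $\delta \mid d_\alpha^*$, so both indexing sets coincide with $\mathfrak{i}(\delta)$, the top degree of $G_\delta$ is again $d_1$, and $G_\delta$ inherits the duality relation. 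Combined with Theorem \ref{thm:gbi}, which provides good basic invariants $\{\phi_E(x_\alpha)\}_{\alpha \in \mathfrak{i}(\delta)}$ of $G_\delta$ compatible at $q$ with the $(\pi_E(g),\zeta)$-graded coordinates $\{\phi_E(z_\alpha)\}_{\alpha \in \mathfrak{i}(\delta)}$, Definition \ref{def:pvf} yields a potential vector field $(\widetilde{\mathcal{G}}_\gamma)_{\gamma \in \mathfrak{i}(\delta)}$ of $G_\delta$ whose index set is
$$
\widetilde{I}_\gamma^{(1)} = \{(a_\alpha)_{\alpha \in \mathfrak{i}(\delta)} \in \mathbb{Z}_{\geq 0}^{\mathfrak{i}(\delta)} \mid \sum_{\alpha \in \mathfrak{i}(\delta)} a_\alpha d_\alpha = d_\gamma + d_1,\ \sum_{\alpha \in \mathfrak{i}(\delta)} a_\alpha \geq 2\}.
$$

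Next I would apply $\phi_E$ termwise to \eqref{eq:pvf}. Since $\phi_E(x_\mu) = 0$ for every $\mu \in \mathfrak{i}^c(\delta)$ by Proposition \ref{LT}(5), the monomial $\phi_E(x_1^{a_1} \cdots x_n^{a_n})$ vanishes unless $a_\mu = 0$ for all $\mu \in \mathfrak{i}^c(\delta)$. For such surviving multi-indices the constraint defining $I_\gamma^{(1)}$ collapses to $\sum_{\alpha \in \mathfrak{i}(\delta)} a_\alpha d_\alpha = d_\gamma + d_1$. If $\gamma \in \mathfrak{i}^c(\delta)$, the left-hand side and $d_1$ are both divisible by $\delta$, which would force $\delta \mid d_\gamma$ and contradict the choice of $\gamma$. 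Hence no term survives and $\phi_E(\mathcal{G}_\gamma) = 0$, which proves (1).

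For (2), fix $\gamma \in \mathfrak{i}(\delta)$. The surviving multi-indices are exactly the elements of $\widetilde{I}_\gamma^{(1)}$, under the obvious bijection which fills in zeros at the positions in $\mathfrak{i}^c(\delta)$. Iterating the two identities in \eqref{phi-q-commutes} (valid for $\alpha \in \mathfrak{i}(\delta)$), I would rewrite each surviving derivative as
$$
\phi_E\Bigl(\frac{\partial^{a_1+\cdots+a_n} x_\gamma}{\partial z_1^{a_1}\cdots \partial z_n^{a_n}}\Bigr)(q) = \frac{\partial^{a_1+\cdots+a_n} \phi_E(x_\gamma)}{\prod_{\alpha \in \mathfrak{i}(\delta)} \partial \phi_E(z_\alpha)^{a_\alpha}}(q).
$$
Since $1 \in \mathfrak{i}(\delta)$, we also have $z_1(q) = \phi_E(z_1)(q)$. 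Assembling these identifications shows that $\phi_E(\mathcal{G}_\gamma)$ agrees termwise with the right-hand side of \eqref{eq:pvf} written for the reduced data of $G_\delta$, i.e.\ with $\widetilde{\mathcal{G}}_\gamma$, which proves (2).

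The main obstacle is not computational; it is the careful bookkeeping of multi-indices together with the interaction of $\phi_E$ with differentiation and evaluation at $q$. The one conceptual point requiring real verification is that $G_\delta$ is itself a duality group with the same top degree $d_1$, for only then can $I_\gamma^{(1)}$ and $\widetilde{I}_\gamma^{(1)}$ be compared through the divisibility argument above. Everything else is a direct consequence of Theorem \ref{thm:gbi} and \eqref{phi-q-commutes}.
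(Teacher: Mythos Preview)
Your proof is correct and follows essentially the same route as the paper: apply $\phi_E$ termwise to \eqref{eq:pvf}, discard multi-indices with $a_\mu \neq 0$ for some $\mu \in \mathfrak{i}^c(\delta)$ via Proposition~\ref{LT}(5), and use \eqref{phi-q-commutes} to identify the surviving terms with the formula \eqref{eq:pvf} written for $G_\delta$. The only minor variation is in part~(1): the paper first rewrites the coefficients via \eqref{phi-q-commutes} as derivatives of $\phi_E(x_\gamma)$ and then invokes $\phi_E(x_\gamma)=0$, whereas you argue directly that the divisibility condition $\delta \mid d_\gamma + d_1$ forces the surviving index set to be empty; both arguments are valid and equally short.
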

\begin{proof}
(1) Let us set 
\begin{equation}\nonumber
I_{\gamma}^{(1)}(\delta)=\{(a_1,\cdots,a_n)\in I_{\gamma}^{(1)}\mid a_{\mu}=0 \text{ for all }\mu \in \mathfrak{i}^c(\delta)\} \quad (1\leq \gamma\leq n)~.
\end{equation}
Because of the vanishing $\phi_{E}(x_{\alpha})=0$ ($\alpha\in \mathfrak{i}^c(\delta)$),
we have
\begin{equation}\begin{split}\nonumber
\phi_E(\mathcal{G}^{\gamma})&=
\frac{z^1(q)}{d_{\gamma}-1}
\sum_{(a_1,\ldots,a_n)\in I_{\gamma}^{(1)}(\delta) } 
\frac{\partial^{a_1+\cdots+a_n} x_{\gamma}}
{\partial z_1^{a_1}\cdots \partial z_n^{a_n}} (q) 
\cdot \frac{\phi_E(x_1^{a_1}\cdots x_n^{a_n})}{a_1!\cdots a_n!}
\end{split}
\end{equation}
Moreover, in the right-hand-side, 
$$
\frac{\partial^{a_1+\cdots+a_n} x^{\gamma}}
{\partial z_1^{a_1}\cdots \partial z_n^{a_n}} (q) 
\stackrel{\eqref{phi-q-commutes}}{=}
\frac{\partial^{a_1+\cdots+a_n} \phi_E(x^{\gamma})}
{\partial z_1^{a_1}\cdots \partial z_n^{a_n}} (q).
$$
Thus we obtain the expression
\begin{equation}\begin{split}\label{phi-g}
\phi_E(\mathcal{G}^{\gamma})&=
\frac{z^1(q)}{d_{\gamma}-1}
\sum_{(a_1,\ldots,a_n)\in I_{\gamma}^{(1)}(\delta)} 
\frac{\partial^{a_1+\cdots+a_n} \phi_E(x_{\gamma})}
{\partial z_1^{a_1}\cdots \partial z_n^{a_n}} (q) 
\cdot \frac{\phi_E(x_1^{a_1}\cdots x_n^{a_n})}{a_1!\cdots a_n!}~.
\end{split}
\end{equation}
For $\gamma\in \mathfrak{i}^c(\delta)$, we have $\phi_E(x^{\gamma})=0$ and hence
$ 
\phi_E(\mathcal{G}^{\gamma})=0$.
For $\gamma\in \mathfrak{i}(\delta)$, \eqref{phi-g} is nothing but the potential vector field 
of $G_{\delta}$.
\end{proof}

\begin{remark} \label{remark-pvf}
Under the change of basic invariants,  the vector field
$$
x_1^{-1}\sum_{\gamma=1}^n \mathcal{G}_{\gamma}\frac{\partial}{\partial x_{\gamma}}
$$
is invariant. 
The factor $x^{-1}_1$ comes from the factor $\Omega_1^{-1}$ in the definition of 
$C_{\alpha\beta}^{\gamma}$
of \cite[Lemma 7.4]{KMS2018}. 
\end{remark}

%%%%%%%%%%%%%%%%%%%%%%%%%%%%%%%%%%%%%%%
\section{Reflection subquotients of  $G(m, m, n+1)$ and $G(m,1,n)$}
\label{sec:monomial-groups}
%%%%%%%%%%%%%%%%%%%%%%%%%%%%%%%%%%%%%%%
In this section,
we first recall 
admissible triplets 
for $G(m,m,m+1)$ and $G(m,1,n)$ used in \cite[\S 10]{KM2023}.
Then
we consider reflection subquotients of the duality groups $G(m,m,n+1)$ and 
$G(m,1,n)$. 
For the definitions of the groups $G(m,m,n+1)$, $G(m,1,n)$ and their degrees,
basic invariants and so on, see \cite[\S 2]{LehrerTaylor}.

The cyclic group consisting of $m$-th roots of unity is denoted $\mu_m$.
The standard basis of  $\mathbb{C}^n$ is denoted $\{e_1,\ldots, e_n\}$ and 
the associated linear coordinates are denoted $u_1,\ldots, u_n $. 
In this section, $\zeta$ is a primitive $nm$-th root of unity.
%%%%%%%%%%%%%%%%%%%%%%%%
\subsection{$G(m,m,n+1)$ for $m\geq 2$, $ n\geq 1$}
\label{sec:mmn}
%%%%%%%%%%%%%%%%%%%%%%%%
Let $$\mathcal{A}(m,m,n+1)
=\{(\theta_1,\ldots,\theta_{n+1},\sigma) \in \mu_m^{n+1}\times \mathfrak{S}_{n+1}\mid 
\theta_1\cdots \theta_{n+1}=1\}.
$$ 
As a set, the group $G(m,m,n+1)$ is $\mathcal{A}(m,m,n+1)$.
Its group structure as well as the action on $\mathbb{C}^{n+1}$ is given by the following  injection $\iota_{n+1}:\mathcal{A}(m,m,n+1)\to GL_{n+1}(\mathbb{C})$:
$$
\iota_{n+1}(\theta_1,\ldots,\theta_{n+1},\sigma)e_i
=\theta_{\sigma(i)}e_{\sigma(i)}.
$$
For example, if $n=2$ and if $\sigma$ is the cyclic permutation $(3~2~1)$, 
$$
\iota_{n+1}(\theta_1,\theta_2,\theta_3,\sigma)=
\begin{bmatrix}
0&\theta_1&0\\
0&0&\theta_2\\
\theta_3&0&0
\end{bmatrix}~.
$$
$G(m,m,n+1)$ is a duality group of rank $n+1$
and the degrees  are 
$m,2m,\ldots nm$ and $n+1$.
For convenience, we arrange the degrees in the following manner:
\begin{equation}\label{Gmmn-degree}
d_1=nm, ~~d_2=(n-1)m,~~\ldots, d_{n-1}=2m,~~d_n=m,\quad d_{n+1}=n+1~.
\end{equation}

Let
$$
g:=\iota_{n+1}(\underbrace{1,\ldots,1}_{n-1},\zeta^n,\zeta^{-n},(n,\cdots,2,1))
=
\left[
\begin{array}{c|c}
\begin{array}{cc}
0&I_{n-1}\\
\zeta^{n}&0
\end{array}
&0\\
\hline
0&\zeta^{-n}
\end{array}
\right].
$$
Its eigenvalues are
$\lambda_i=\zeta^{1+(i-1)m}$ ($i=1,\ldots,n$)
and $\zeta^{-n}$,  and eigenvectors are
$$
q_i=\frac{1}{\sqrt{n}} \begin{bmatrix}
1\\ \lambda_i\\ \lambda_i^2\\ \vdots \\ \lambda_i^{n-1}\\0
\end{bmatrix}\quad
(1\leq i \leq n),\qquad
q_{n+1}=\begin{bmatrix}0\\ \vdots\\0 \\1\end{bmatrix}.
$$
Since reflection hyperplanes of $G(m,m,n+1)$ are  the orthogonal complements of 
$$
e_i-\zeta^{kn} e_j \quad (1\leq i<j\leq n+1,~ 0\leq k<m)~,
$$
it is not difficult to see that the vector $q_1$ does not lie on any reflection hyperplanes.
Therefore  $(g, \zeta, q_1)$ is an admissible triplet of $G(m,m,n+1)$.

It is known that 
a set of basic invariants of $G(m,m,n+1)$ is given by
\begin{equation}\nonumber \label{sigma}
\sigma_{\alpha}:=\begin{cases}
\mathbb{E}_{n+1-\alpha}(u_1^m,\ldots, u_{n+1}^m)
\quad (1\leq \alpha \leq n),\\
\mathbb{E}_{n+1}(u_1,\ldots, u_{n+1})=u_1\cdots u_{n+1}\quad (\alpha=n+1).
\end{cases}
\end{equation}
Here $\mathbb{E}_k(v_1,\ldots, v_{n+1})$ denotes the $k$-th elementary symmetric polynomial
in the $(n+1)$ variables $v_1,\ldots, v_{n+1}$.
It seems it is not feasible to obtain closed expressions of the good basic invariants and the potential vector field for general $n$. For $G(m,m,2)$, $G(m,m,3)$ and $G(m,m,4)$, see \cite[\S 10.1]{KM2023}.

%%%%%%%%%%%%%%%%%%%%%%%
\subsection{$G(m,1,n)$ for $m\geq 2$}
\label{sec:m1n}
%%%%%%%%%%%%%%%%%%%%%%%
As a set, the group $G(m,1,n)$ is $\mu_m^{n}\times \mathfrak{S}_{n}$.
Its group structure and the action on $\mathbb{C}^{n}$ is given by the following  injection $\iota_{n}:
G(m,1,n)\to GL_{n}(\mathbb{C})$:
$$
\iota_{n}(\theta_1,\ldots,\theta_{n},\sigma)e_i
=\theta_{\sigma(i)}e_{\sigma(i)}.
$$
$G(m,1,n)$ is a duality group of rank $n$ and the degrees are $m,2m,\ldots, nm$.
As in the case of $G(m,m,n+1)$,
we arrange the degrees in the following manner:
$$
d_1=nm, ~~d_2=(n-1)m,~~\ldots, d_{n-1}=2m,~~d_n=m.
$$

Let 
$$
\bar{g}:=
\iota_n(\underbrace{1,\ldots,1}_{n-1},\zeta^n,(n~\ldots~2~1))=
\left[
\begin{array}{c|c}
0&I_{n-1}\\ \hline
\zeta^n&0
\end{array}
\right].
$$
 Its eigenvalues are
$\lambda_i=\zeta^{1+(i-1)m}$ ($i=1,\ldots,n$) 
and $\lambda_i$-eigenvectors are
$$
\bar{q}_i=\frac{1}{\sqrt{n}} \begin{bmatrix}
1\\ \lambda_i\\ \lambda_i^2\\ \vdots \\ \lambda_i^{n-1}
\end{bmatrix}.
$$
Since reflection hyperplanes of $G(m,1,n)$ are orthogonal complements of 
$$
e_i-\zeta^{kn}e_j ~~ (1\leq i<j\leq n,0\leq k<m),\quad
e_i ~~(1\leq i\leq n),
$$
$\bar{q}_1$ does not lie on any of these. Therefore 
$\bar{q}_1$ is a regular vector   and $(\bar{g}, \zeta, \bar{q}_1)$
is an admissible triplet of $G(m,1,n)$.

Let 
$$
\bar{\sigma}_{\alpha}:=\mathbb{E}_{n+1-\alpha}(u_1^m,\ldots, u_n^m)
\quad  (\alpha=1,\ldots,n). 
$$
Then $\bar{\sigma}_1,\ldots,\bar{\sigma}_n$
form a set of basic invariants of $G(m,1,n)$. 
For the good basic invariants and the potential vector field of $G(m,1,2)$, $G(m,1,3)$, $G(m,1,4)$, see \cite[\S 10.2]{KM2023}.

%%%%%%%%%%%%%%%%%%%%%%%%%%%%%%%%%%%%%%%
\subsection{The reflection subquotient of $G(m,m,n+1)$ by $\delta=m$}
%%%%%%%%%%%%%%%%%%%%%%%%%%%%%%%%%%%%%%%

\begin{proposition}\label{prop:mmn2m1n}
If $n+1$ is not divisible by $m$, 
the $m$-reflection subquotient of $G(m,m,n+1)$ is $G(m,1,n)$.
\end{proposition}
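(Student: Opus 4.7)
The plan is to apply the setup of \S \ref{sec:at-rq} to the admissible triplet $(g,\zeta,q_1)$ of $G(m,m,n+1)$ recalled in Section 4.1 with $\delta=m$, compute the eigenspace $E = V(g^{n},\zeta^{n})$ explicitly, and then identify $\pi_E(N_E)$ by directly inspecting which elements of $G(m,m,n+1)$ stabilize $E$.

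First I would compute the action of $g^n$ on the standard basis $e_1,\ldots, e_{n+1}$. The underlying permutation $\sigma = (n,\ldots,2,1)$ of $g$ is an $n$-cycle on $\{1,\ldots,n\}$ that fixes $n+1$, and among the weights $(\theta_1,\ldots,\theta_{n+1}) = (1,\ldots,1,\zeta^n,\zeta^{-n})$ only the transition $g e_1 = \zeta^n e_n$ is non-trivial on the first $n$ basis vectors. Iterating $g$ a total of $n$ times thus cycles each $e_j$ ($j\leq n$) around once and picks up exactly one factor of $\zeta^n$, giving $g^n e_j = \zeta^n e_j$ for $1\leq j\leq n$, while $g^n e_{n+1} = (\zeta^{-n})^n e_{n+1} = \zeta^{-n^2} e_{n+1}$. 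Under the hypothesis $m\nmid n+1$ one has $\zeta^{-n^2}\neq \zeta^n$: indeed $\zeta^{n^2+n}=1$ would force $nm \mid n(n+1)$, i.e.\ $m\mid n+1$. Hence $E=\mathrm{span}(e_1,\ldots,e_n)$, which I identify with $\mathbb{C}^n$ via the first $n$ coordinates $(u_1,\ldots,u_n)$.

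Next I would compute $N_E$ and $\pi_E(N_E)$ directly from the matrix description of $G(m,m,n+1)$ in \S \ref{sec:mmn}. An element $s=\iota_{n+1}(\theta_1,\ldots,\theta_{n+1},\tau)$ sends $e_i$ to $\theta_{\tau(i)} e_{\tau(i)}$, so it preserves $E$ if and only if $\tau(\{1,\ldots,n\})\subset \{1,\ldots,n\}$, i.e.\ $\tau(n+1)=n+1$. For such $s$, writing $\tau' := \tau|_{\{1,\ldots,n\}}\in \mathfrak{S}_n$, the restriction is $\pi_E(s) = \iota_n(\theta_1,\ldots,\theta_n,\tau')$. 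The defining constraint $\theta_1\cdots\theta_{n+1}=1$ of $G(m,m,n+1)$ imposes no condition on $(\theta_1,\ldots,\theta_n)\in \mu_m^n$, because $\theta_{n+1}\in \mu_m$ is free to absorb the product. Hence $\pi_E(N_E) = \{\iota_n(\theta_1,\ldots,\theta_n,\tau') : \theta_i\in \mu_m,\ \tau'\in \mathfrak{S}_n\}$, which is exactly $G(m,1,n)$, as required.

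The main obstacle is minor: the only non-trivial computation is the modular-arithmetic check $\zeta^{-n^2}\neq \zeta^n$ under the hypothesis $m\nmid n+1$, needed to rule out $e_{n+1}\in E$. As a sanity check, Proposition \ref{LT}(5) predicts that $G_m$ has degrees $\{d_\alpha : m\mid d_\alpha\}=\{m,2m,\ldots,nm\}$, matching the degrees of $G(m,1,n)$; this also gives an alternative route to the conclusion via the Shephard--Todd classification combined with Proposition \ref{LT}(6).
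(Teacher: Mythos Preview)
Your proof is correct and follows essentially the same approach as the paper: compute $g^n$ to find $E=\mathrm{span}(e_1,\ldots,e_n)$ (using $m\nmid n+1$ to exclude $e_{n+1}$), then observe that $N_E$ consists of the elements with $\tau(n+1)=n+1$ and that restricting to $E$ gives $G(m,1,n)$. Your write-up is in fact slightly more explicit than the paper's in justifying $\zeta^{-n^2}\neq\zeta^n$ and in explaining why the product constraint $\theta_1\cdots\theta_{n+1}=1$ disappears upon restriction.
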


\begin{proof} 
Take the admissible triplet $(g,\zeta,q_1)$ constructed in \S \ref{sec:mmn}.
Notice that  $d_1/m=n$.
Since 
$$
g^n=\begin{bmatrix}\zeta^n I_{n}&0\\0&\zeta^{-n^2}\end{bmatrix},
$$
the $\zeta^n$-eigenspace $E$ of $g^n$  is 
$$
E=\sum_{j=1}^n \mathbb{C}e_j~. 
$$
(Notice that we need the assumption here; if  $n+1$ is divisible by $m$,  $E=\mathbb{C}^{n+1}$.)
So the subgroup $N_E=\{s\in G(m,m,n+1)\mid s(E)=E\}$ consists of matrices
$
\iota_{n+1}(\theta_1,\ldots,\theta_n,\theta_{n+1},\sigma)$ such that  $\sigma(n+1)=n+1$.
Then we immediately see that the $m$-reflection subquotient of
$
G(m,m,n+1)$ is $G(m,1,n).
$
\end{proof}

\begin{remark}
In the case where $n+1$ is divisible by $m$,  
the $\zeta^n$-eigenspace of $g^n$ is $\mathbb{C}^{n+1}$
and 
the $m$-reflection subquotient of $G(m,m,n+1)$ is itself.
However,  if we take $E=\sum_{j=1}^n \mathbb{C}e_j$ instead of the $\zeta^n$-eigenspace of $g^n$, $N_E=G(m,1,n)$ hold. Moreover the statements of Proposition \ref{LT}(5) and 
 Theorems \ref{thm:gbi},\ref{thm:pvf} hold true if we replace $\mathfrak{i}(\delta)$ with $\{1,2,\ldots,n\}$
and $\mathfrak{i}^c(\delta)$ with $\{n+1\}$.
\end{remark}

Proposition \ref{prop:mmn2m1n} and Theorems \ref{thm:gbi} and \ref{thm:pvf} imply that 
one can obtain 
the good basic invariants and the potential vector field of $G(m,1,n)$ from those of $G(m,m,n+1)$.
As an example, the reader can compare those of $G(m,m,3)$ and $G(m,1,2)$ listed in \cite[\S 10.1, \S 10,2]{KM2023}. 
Moreover,
the following lift  of good basic invariants from $G(m,1,n)$ to $G(m,m,n+1)$ holds.
\begin{proposition}\label{lift}
Let  $\bar{x}=(\bar{x}_1,\ldots, \bar{x}_n)$ be the set of good basic invariants
of 
$G(m,1,n)$ with respect to $(\bar{g},\zeta, \bar{q}_1)$ which is compatible with 
the graded coordinates $\bar{z}_1,\ldots,\bar{z}_n$.  If $\bar{x}_{\alpha}$ $(1\leq \alpha\leq n)$ is expressed as
$$\bar{x}_{\alpha}=\sum_{
\begin{subarray}{c}a=(a_1,\ldots,a_n)\in \mathbb{Z}_{\geq 0}^n;\\
a_1d_1+\cdots+a_nd_n=d_{\alpha}\end{subarray}}
c_a^{\alpha} \cdot \bar{\sigma}_1^{a_1}\cdots\bar{\sigma}_n^{a_n}
\quad (c_a^{\alpha}\in \mathbb{C}),
$$
then 
\begin{equation}\label{eq:invariants}
\begin{split}
x_{\alpha}&=\sum_{\begin{subarray}{c}a=(a_1,\ldots, a_n)\in \mathbb{Z}_{\geq 0}^n;\\
a_1d_1+\cdots+a_nd_n=d_{\alpha}\end{subarray}}
c_a^{\alpha}\cdot \sigma_1^{a_1}\cdots \sigma_n^{a_n}, 
\\
x_{n+1}&=(\sqrt{n})^n\zeta_{nm}^{-\frac{n(n+1)}{2}}\sigma_{n+1}
\end{split}
\end{equation}
form a set of good basic invariants of $G(m,m,n+1)$ with respect to $(g,\zeta,q_1)$
which is compatible with the graded coordinates $\bar{z}_1,\ldots,\bar{z}_n,u_{n+1}$.
\end{proposition}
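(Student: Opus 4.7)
The plan is to verify three things, in order: (i) that $(x_1,\ldots,x_{n+1})$ is algebraically independent; (ii) that it is compatible at $q_1$ with the coordinates $(\bar z_1,\ldots,\bar z_n,u_{n+1})$; and (iii) that it satisfies the goodness conditions. The underlying mechanism is the surjection $\phi_E: S[\mathbb{C}^{n+1}] \to S[E]$ with $E = \bigoplus_{j=1}^n \mathbb{C}e_j$: one checks directly that $\phi_E(\sigma_\alpha) = \bar\sigma_\alpha$ for $\alpha \leq n$ and $\phi_E(\sigma_{n+1}) = 0$, so by construction $\phi_E(x_\alpha) = \bar x_\alpha$ for $\alpha \leq n$ and $\phi_E(x_{n+1}) = 0$. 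Under the identifications $z_\beta|_E = \bar z_\beta$ for $\beta \leq n$ and $z_{n+1} = u_{n+1}$, this allows many vanishing statements to be pulled back from $\bar x$.

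Step (i) follows from the block structure of the Jacobian $(\partial x_\alpha/\partial \sigma_\beta)_{1 \leq \alpha,\beta \leq n+1}$: the upper-left $n\times n$ block coincides entry-wise with the Jacobian of $\bar x$ with respect to $\bar \sigma$ (invertible since $\bar x$ is a basic invariant system of $G(m,1,n)$); the bottom-right entry is the nonzero scalar multiplying $\sigma_{n+1}$; and the off-diagonal blocks vanish because $x_\alpha$ for $\alpha \leq n$ is independent of $\sigma_{n+1}$ and vice versa. For step (ii), I would split the indices $(\alpha,\beta)$ into four blocks. The block $\alpha,\beta \leq n$ reduces, via $\phi_E(x_\alpha)=\bar x_\alpha$ and $z_\beta|_E = \bar z_\beta$, to the compatibility of $\bar x$. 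For $\alpha \leq n$ and $\beta = n+1$, the fact that $x_\alpha \in \mathbb{C}[u_1^m,\ldots,u_{n+1}^m]$ forces $\partial x_\alpha/\partial u_{n+1}$ to carry a factor of $u_{n+1}^{m-1}$, which vanishes at $q_1$ since $m \geq 2$. For $\alpha = n+1$ and $\beta \leq n$, differentiating $x_{n+1} = c\cdot u_1\cdots u_{n+1}$ by a linear combination of $\partial/\partial u_1,\ldots,\partial/\partial u_n$ preserves the factor $u_{n+1}$, which vanishes at $q_1$. Finally, the diagonal entry $\partial x_{n+1}/\partial u_{n+1}(q_1) = c\cdot(u_1\cdots u_n)(q_1)$ pins down the scalar $c$ announced in \eqref{eq:invariants}.

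For step (iii), I would split $(a_1,\ldots,a_{n+1}) \in I_\alpha^{(0)}$ by $\alpha$ and by the value of $a_{n+1}$. When $\alpha \leq n$ and $a_{n+1} = 0$, the vanishing follows from the goodness of $\bar x_\alpha$ together with $\phi_E(x_\alpha) = \bar x_\alpha$ and the coordinate identification (parallel to the proof of Theorem \ref{thm:gbi}(3)). When $\alpha \leq n$ and $a_{n+1} \geq 1$, the degree identity $a_{n+1}(n+1) + \sum_{\beta \leq n} a_\beta d_\beta = d_\alpha \leq nm$ forces $a_{n+1} \leq nm/(n+1) < m$; combined with $x_\alpha \in \mathbb{C}[u_1^m,\ldots,u_{n+1}^m]$, the $a_{n+1}$-th partial in $u_{n+1}$ retains a strictly positive power of $u_{n+1}$ and hence vanishes at $q_1$. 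For $\alpha = n+1$: linearity of $x_{n+1}$ in $u_{n+1}$ kills the case $a_{n+1} \geq 2$ automatically; the case $a_{n+1} = 1$ is excluded by the degree identity (which would demand $\sum_{\beta \leq n} a_\beta d_\beta = 0$ and hence all $a_\beta = 0$, contradicting $\sum a_i \geq 2$); and $a_{n+1} = 0$ preserves a factor of $u_{n+1}$ that vanishes at $q_1$.

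The main obstacle is the case $\alpha \leq n$, $a_{n+1} \geq 1$ of step (iii): the vanishing rests on the interplay between the structural fact that $x_\alpha$ depends on $u_{n+1}$ only through $u_{n+1}^m$ and the degree-theoretic bound $a_{n+1} < m$, neither of which follows directly from Theorem \ref{thm:gbi}. All the other verifications reduce either to the already-established properties of $\bar x$ via $\phi_E$, or to the explicit form $x_{n+1} = c\cdot u_1\cdots u_{n+1}$.
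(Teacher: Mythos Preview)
Your proposal is correct. The paper omits the proof entirely (``The proof is elementary and omitted''), and the direct verification you outline---exploiting the restriction $\phi_E(\sigma_\alpha)=\bar\sigma_\alpha$ together with the facts that $x_\alpha\in\mathbb{C}[u_1^m,\ldots,u_{n+1}^m]$ for $\alpha\leq n$ and that $x_{n+1}$ is linear in $u_{n+1}$---is precisely the kind of elementary argument the authors allude to. The degree bound $a_{n+1}(n+1)\leq d_\alpha\leq nm<m(n+1)$ giving $a_{n+1}<m$ is the only slightly non-obvious point, and you have handled it correctly.
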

The proof is elementary and omitted.

%%%%%%%%%%%%%%%%%%%%%%%%%%%%%%%%%
\subsection{Reflection subquotients of  $G(m,1,n)$ }
%%%%%%%%%%%%%%%%%%%%%%%%%%%%%%%%%
Since all the degrees of $G(m,1,n)$ are divisible by $m$,
considering the reflection subquotients  by  divisors of $d_1=nm$ 
is the same as considering those  by $m$ times divisors of $n$.
So let $k>1$ be a divisor of $n$. 
\begin{proposition}
The $km$-reflection subquotient of 
 $G(m,1,n)$ is isomorphic to $G(km,1,\frac{n}{k})$.
\end{proposition}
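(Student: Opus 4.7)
The plan is to realize $G(km,1,n/k)$ explicitly as $\pi_E(N_E)$ by (i) identifying the eigenspace $E$ with its natural basis, (ii) lifting the generators of $\mu_{km}^r\rtimes\mathfrak{S}_r$ to elements of $N_E\subset G(m,1,n)$, and (iii) matching orders via Proposition~\ref{LT}(5).

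First I would apply the setup of \S\ref{sec:at-rq} to the admissible triplet $(\bar g,\zeta,\bar q_1)$ of \S\ref{sec:m1n} with $\delta=km$, noting that $d_1/\delta=n/k=:r$. Since $\bar g^n=\zeta^n I_n$, the basis $\{e_1,\ldots,e_n\}$ splits into $r$ blocks $B_b:=\{e_b,e_{b+r},\ldots,e_{b+(k-1)r}\}$ ($b=1,\ldots,r$), each of which is preserved by $\bar g^r$. On $\mathrm{span}\,B_b$, $\bar g^r$ acts as a twisted cyclic shift whose eigenvalues are the $k$-th roots of $\zeta^n$, and a direct computation gives the $\zeta^{n/k}$-eigenvector
\begin{equation*}
w_b = e_b+\zeta^{n/k}e_{b+r}+\zeta^{2n/k}e_{b+2r}+\cdots+\zeta^{(k-1)n/k}e_{b+(k-1)r}~.
\end{equation*}
Hence $E=V(\bar g^r,\zeta^{n/k})=\bigoplus_{b=1}^{r}\mathbb{C}\,w_b$, and $\dim E=r=\mathfrak{a}(km)$ as predicted by Proposition~\ref{LT}(1).

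Next I would produce explicit elements of $N_E$. For each $j\in\{1,\ldots,r\}$, let $t_j\in G(m,1,n)$ be the element acting as $\bar g^r$ on $\mathrm{span}\,B_j$ and trivially on the other basis vectors, i.e.
\begin{equation*}
t_j(e_j)=\zeta^n e_{j+(k-1)r},\quad t_j(e_{j+ir})=e_{j+(i-1)r}\ (1\le i\le k-1),\quad t_j(e_l)=e_l\ (l\notin B_j)~.
\end{equation*}
Because $w_j\in\mathrm{span}\,B_j$ is a $\zeta^{n/k}$-eigenvector of $\bar g^r|_{\mathrm{span}\,B_j}$, one has $t_j(w_j)=\zeta^{n/k}w_j$ and $t_j(w_{b'})=w_{b'}$ for $b'\neq j$. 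Hence $t_j\in N_E$ and, in the basis $(w_1,\ldots,w_r)$ of $E$, $\pi_E(t_j)$ is the diagonal matrix with $\zeta^{n/k}$ in the $j$-th slot. For $1\le j_1<j_2\le r$, let $\tau_{j_1,j_2}\in\mathfrak{S}_n\subset G(m,1,n)$ be the product of commuting transpositions swapping $e_{j_1+ir}$ with $e_{j_2+ir}$ for $0\le i\le k-1$; then $\tau_{j_1,j_2}(w_{j_1})=w_{j_2}$, $\tau_{j_1,j_2}(w_{j_2})=w_{j_1}$ and $\tau_{j_1,j_2}$ fixes every other $w_{b'}$, so $\tau_{j_1,j_2}\in N_E$ and $\pi_E(\tau_{j_1,j_2})$ is the transposition of $w_{j_1}$ and $w_{j_2}$. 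Since $\zeta^{n/k}$ has order $km$, the $\pi_E(t_j)$'s generate the diagonal subgroup $\mu_{km}^r$ while the $\pi_E(\tau_{j_1,j_2})$'s generate $\mathfrak{S}_r$, so together they generate $\mu_{km}^r\rtimes\mathfrak{S}_r=G(km,1,r)$ inside $GL(E)$.

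To conclude I would match orders. By Proposition~\ref{LT}(5), the degrees of $G_{km}=\pi_E(N_E)$ are exactly those degrees of $G(m,1,n)$ divisible by $km$, namely $km,2km,\ldots,rkm$; hence $|G_{km}|=r!(km)^r=|G(km,1,r)|$. Combined with the inclusion $G(km,1,r)\subseteq G_{km}$ from the previous step, this forces the equality $G_{km}=G(km,1,r)$. The main obstacle is the construction of $t_j$: no single-coordinate diagonal reflection of $G(m,1,n)$ stabilizes $E$, and the key observation is that restricting $\bar g^r$ to one block $B_j$ and extending by the identity does yield an element of $G(m,1,n)$ that realizes the full primitive $km$-th root of unity $\zeta^{n/k}$ on a single coordinate of $E$.
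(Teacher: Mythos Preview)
Your proof is correct. Both you and the paper follow the same overall strategy: identify the eigenspace $E$ via an explicit basis, exhibit enough elements of $N_E$ acting monomially on that basis, and then close the argument by matching orders using Proposition~\ref{LT}(5). The paper writes $n'=n/k$ for your $r$ and uses the basis $\check e_j=\frac{1}{\sqrt{k}}\sum_{p=0}^{k-1}\zeta^{pn'}e_{j+pn'}$, which is just your $w_j$ up to a scalar, so the two descriptions of $E$ coincide.

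The one genuine difference is in the ``enough elements'' step. The paper produces elements of $N_E$ by the block-diagonal embedding $\kappa:GL_{n'}(\mathbb{C})\to GL_n(\mathbb{C})$, $X\mapsto\mathrm{diag}(X,\ldots,X)$, and checks that $\kappa(\iota_{n'}(\theta,\sigma))$ acts on $\check e_j$ exactly as $\iota_{n'}(\theta,\sigma)$; but since $\theta_i\in\mu_m$, this only exhibits a copy of $G(m,1,n')$ inside $\pi_E(N_E)$, and the paper then jumps to the conclusion via the order count. Your construction of the block-localized cyclic shift $t_j$ (the restriction of $\bar g^{r}$ to $\mathrm{span}\,B_j$ extended by the identity) is the extra ingredient that produces a primitive $km$-th root $\zeta^{n/k}$ on a single coordinate of $E$, and thereby realizes the full diagonal torus $\mu_{km}^{r}$. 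This makes the inclusion $G(km,1,r)\subseteq\pi_E(N_E)$ explicit, so that the order comparison really does force equality. In this respect your argument is tighter than the paper's: what the paper gains in brevity via $\kappa$ it loses by not exhibiting the $km$-th roots, whereas your $t_j$ fills exactly that gap.
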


\begin{proof}
Take the admissible triplet $(\bar{g},\zeta,\bar{q}_1)$ constructed in \S  \ref{sec:m1n}.
Notice that $d_1/(km)=n/k=:n'$.

First we study the $\zeta^{n'}$-eigenspace $E$ of $\bar{g}^{n'}$.
The eigenvectors 
$\bar{q}_{\alpha}$ ($1\leq \alpha\leq n$) of $\bar{g}$ given in \S \ref{sec:m1n}
has eigenvalue $\zeta^{1+(\alpha-1) m}$.
Therefore $\bar{q}_{\alpha}\in E$ is equivalent to 
$(\alpha-1)n'm\equiv 0 \mod nm$, or
$\alpha-1\equiv 0 \mod k$. So we have
$$
E=\bigoplus_{\beta=0}^{n'-1} \mathbb{C} \bar{q}_{1+k\beta}~.
$$
Notice that $\bar{q}_{1+k\beta}$ is written as follows.
\begin{equation}\nonumber\begin{split}
\bar{q}_{1+k\beta}&=\frac{1}{\sqrt{n}}\sum_{i=0}^{n-1} (\zeta^{1+km\beta })^i \, e_{i+1}
\stackrel{i=j+pn'}{=}\frac{1}{\sqrt{n}}
\sum_{j=0}^{n'-1}\zeta^{(1+km\beta)j}\sum_{p=0}^{k-1}\zeta^{pn'}\, e_{j+pn'+1}~.
\end{split}
\end{equation}
Therefore if we put
$$
\check{e}_j=\frac{1}{\sqrt{k}}\sum_{p=0}^{k-1}\zeta^{pn'}\,e_{j+pn'} \quad (1\leq j\leq  n'),
$$
we have
$$
\bar{q}_{1+k\beta}=\frac{\sqrt{k}}{\sqrt{n}}\sum_{j=1}^{n'}\zeta^{(1+km\beta)(j-1)} \,\check{e}_j~.
$$
Thus  we have
$$
E=\bigoplus_{j=1}^{n'} \mathbb{C}\check{e}_j~.
$$

Next we study the subgroup $N_E=\{s\in G(m,1,n)\mid s(E)=E\}$. 
Let $\kappa:GL_{n'}(\mathbb{C})\to GL_n(\mathbb{C})$ be the following injection;
$$
\kappa(X)=\begin{bmatrix}
X&&O\\
  & \ddots&\\
 O &&X
\end{bmatrix}~.
$$
If $X=\iota_{n'}(\theta_1,\ldots,\theta_{n'},\sigma)$, then
$$
\kappa(X) e_{j+pn'}=\theta_{\sigma(j)} e_{\sigma(j)+pn'} \quad (1\leq j\leq n', 0\leq p<k )
$$
and 
$$
\kappa(X) \check{e}_j =\frac{1}{\sqrt{k}}\sum_{p=0}^{k-1}\zeta^{pn'} \theta_{\sigma(j)}\,e_{\sigma(j)+pn'}=\theta_{\sigma(j)}\check{e}_{\sigma(j)}~.
$$
Therefore $\kappa(X)$ preserves the subspace $E$. In other words,
$\kappa(G(m,1,n))\subset N_E$.

 On the other hand, by Proposition \ref{LT}-(4), 
 the degrees of the $km$-reflection subquotient $G(m,1,n)_{km}$ are $km,2km,\ldots, n'km$.
 Since the order of the reflection group equals the product of its degrees,
 $$
 |G(m,1,n)_{km}|=(km)\cdot (2km)\cdots (n'km)=|G(km,1,n')|~.
 $$
 Thus $G(m,1,n)_{km}\cong G(km,1,n')$.
 \end{proof}

As an example, 
the reader can check that the set of good basic invariants of $G(m,1,4)$  \cite[\S 10.2.3]{KM2023}
induce  that of $G(2m,1,2)$ \cite[\S 10.2.1]{KM2023}:
If we set 
$$\bar{\sigma}_1\to \bar{\sigma}_1,~~
\bar{\sigma}_2\to 0,~~
\bar{\sigma_3}\to \bar{\sigma}_2,~~\bar{\sigma}_4\to 0,$$
in  $x_1,x_3$ of $G(m,1,4)$,  $x_1,x_3$ are mapped to $x_1,x_2$ of $G(2m,1,2)$.

\subsection{The reduction sequence of $G(m,m,n+1)$}
Let $\delta$ be a divisor of $d_1=nm$.
\begin{itemize} 
\item
 If $\delta$ is a divisor of both $m$ and $n+1$,  the $\delta$-reflection subquotient is
$G(m,m,n+1)$ itself.
\item
 If $\delta$ is a divisor of $m$ and if $n+1$ is not divisible by $\delta$,
then by construction,
the $\delta$-reflection subquotient is the same as the $m$-reflection subquotient $G(m,1,n)$.
\item Since $\delta>1$ is assumed to be a divisor of $d_1=nm$,
it cannot happen that $\delta$ is not a divisor of $m$ but that $\delta$ is a divisor of $n+1$.

\item
 If both $m$ and $n+1$ are not divisible by $\delta$, let us write 
$\delta=k\cdot m'$ where $m'$ is the greatest common divisor of $\delta$ and $m$.
Notice that $k>1$ must be a divisor of $n$, since $\delta$ is a divisor of $nm$.
For such $\delta$,  the $km'$-reflection subquotient 
is the compotision of the $m'$-reflection subquotient which is $G(m,1,n)$
and the $k$-reflection subquotient.
This process is described in \eqref{monomial-sequence}.
\end{itemize}

%%%%%%%%%%%%%%%%%%%%%%%%%%%%
\section{Sequence of reflection subquoitents of $G_{35}=E_6$}
\label{sec:E6}
%%%%%%%%%%%%%%%%%%%%%%%%%%%%
In this section and the next, 
we study the sequences of reflection subquotients depicted in Figures \ref{fig1},\ref{fig2}
respectively.
To avoid double subscripts  such as $(G_{\bullet})_{\delta}$,
we write $G_{\bullet}(\delta)$ for the $\delta$-reflection subquotient of a primitive group $G_{\bullet}$.
For each group, we construct
an admissible triplet, a set of good basic invariants and the potential vector field.
For each arrow  $$
G_{\bullet}\stackrel{\delta}{\longrightarrow} G_{\bullet\bullet},
$$
we explicitly construct an isomorphism between the reflection subquoient $G_{\bullet}({\delta})$
and $G_{\bullet\bullet}$ (except the right-most  arrows $\longrightarrow \mu_{d_1}$).

The standard coordinates of $\mathbb{C}^n$ are denoted $u_1,\ldots, u_n$
and the standard basis is denoted $e_1,\ldots,e_n$. 
The reflection on $\mathbb{C}^n$ with a  root  $v\in \mathbb{C}^n$ and a nontrivial eigenvalue $\lambda\in \mathbb{C}^{\times} (\lambda\neq 1)$
is denoted $s(v,\lambda)$. Explicitly,
$$
s(v,\lambda)(w)=w-(1-\lambda)\frac{(w,v)}{(v,v)} v~.
$$
Here $(~,~)$ is the standard Hermitian inner product on $\mathbb{C}^n$ given by
$$
(v,w)=\sum_{i=1}^n v_i\bar{w}_i~.
$$
When $\lambda=-1$, we omit $\lambda$ and write $s(v)$ for $s(v,-1)$. 

In this section, $$\zeta=e^{\frac{\pi i}{6}}, \quad \omega=e^{\frac{2\pi i}{3}}~.$$

%%%%%%%%%%%%%%%%%%%%%%%%%%%%%%%%
\subsection{An admissible triplet, good basic invariants 
and the potential vector filed of $G_{35}=E_6$}
%%%%%%%%%%%%%%%%%%%%%%%%%%%%%%%%
The finite Coxeter group $G_{35}=E_6$ has rank $6$ and degrees $12,9,8,6,5,2$.
It has $36$ reflections of order two.
The following six roots form a simple root system \cite{Mehta1988} (see also \cite[\S 3]{Talamini2018}):
\begin{equation}\nonumber
\begin{split}
\alpha_1^{(35)}&=e_2-e_3+\frac{1}{\sqrt{2}}e_5+\sqrt{\frac{3}{2}}e_6,
\quad
\alpha_2^{(35)}=e_3-e_4-\sqrt{2}e_5,
\\
\alpha_3^{(35)}&=2e_4,\qquad\qquad\qquad\qquad\qquad
\alpha_4^{(35)}=e_3-e_4+\sqrt{2}e_5,
\\
\alpha_5^{(35)}&=e_2-e_3-\frac{1}{\sqrt{2}}e_5-\sqrt{\frac{3}{2}}e_6,
\quad
\alpha_6^{(35)}=e_1-e_2-e_3-e_4.
\end{split}
\end{equation}
Then  a Coxeter element is given by
\begin{equation}\nonumber
g_{(35)}=s(\alpha_1^{(35)})s(\alpha_2^{(35)})s(\alpha_3^{(35)})
s(\alpha_4^{(35)})s(\alpha_5^{(35)})s(\alpha_6^{(35)})~.
\end{equation}
The Coxeter element
$g_{(35)}$ is $12$-regular with the following eigenvalues and eigenvectors:
\begin{equation}\nonumber
\begin{split}
\zeta:~~
& q_1^{(35)}=\frac{1}{4\sqrt{3-\sqrt{3}}}
\begin{bmatrix}-(1+i) (\sqrt{3}+i)\\ (1-i) (\sqrt{3}-1) \\ 
(1+i)  (\sqrt{3}-2-i)\\ 
(1-i) (\sqrt{3}-1)\\
-\sqrt{2}\\
\sqrt{6}\end{bmatrix}
\quad \zeta^4:~~
q_2^{(35)}=\frac{1}{2\sqrt{2}}
\begin{bmatrix}
0\\-i\sqrt{2}\\0\\ i \sqrt{2}\\ \sqrt{3}\\1
\end{bmatrix},
\end{split}
\end{equation}
\begin{equation}\nonumber
\begin{split}
\zeta^5:~~&
 q_3^{(35)}=\frac{1}{4\sqrt{3+\sqrt{3}}}
\begin{bmatrix}
(1+i) (\sqrt{3}-i )\\
-(1-i) (1+\sqrt{3})\\
-(1+i)  (\sqrt{3}+2+i)\\
-(1-i) (1+\sqrt{3})\\
-\sqrt{2}\\
\sqrt{6}\end{bmatrix},
\quad
\zeta^7:~~
 q_4^{(35)}=\frac{1}{4\sqrt{3+\sqrt{3}}}
\begin{bmatrix}
(1-i) ( \sqrt{3}+i)\\
-(1+i) (1+\sqrt{3})\\
-(1-i) (2 +\sqrt{3}-i)\\
  - (1+i)(1+\sqrt{3})\\
   -\sqrt{2}\\
   \sqrt{6}\end{bmatrix},
\end{split}
\end{equation}
\begin{equation}\nonumber
\begin{split}
\zeta^8:~~&
q_5^{(35)}=\frac{1}{2\sqrt{2}}
\begin{bmatrix}
0\\i\sqrt{2}\\0\\- i \sqrt{2}\\ \sqrt{3}\\1
\end{bmatrix},
\quad
\zeta^{11}:~~
 q_6^{(35)}=\frac{1}{4\sqrt{3-\sqrt{3}}}
\begin{bmatrix}
-(1-i) ( \sqrt{3}-i)\\
(1+i)(\sqrt{3}-1)\\
-(1-i)(2 -\sqrt{3}-i)\\
   (1+i)(\sqrt{3}-1)\\
   -\sqrt{2}\\
   \sqrt{6}\end{bmatrix}~.
\end{split}
\end{equation}
We take an admissible triplet $(g_{(35)},\zeta, q_1^{(35)})$
and the $(g_{(35)},\zeta)$-graded coordinates of $\mathbb{C}^6$ associated with  $q^{(35)}_1,\ldots, q^{(35)}_6$.
The set of basic invariants which is good with repect to $(g_{(35)},\zeta,q_1^{(35)})$
and which is compatible with the above graded coordinates at $q_1^{(35)}$ is 
\begin{equation}\nonumber
\begin{split}
x_1^{(35)}&=\frac{1}{20\sqrt{3}}\left(
P_{12}-\frac{209}{2304}P_8P_2^2-\frac{77}{576}P_6^2+\frac{2959}{165888}P_6 P_2^3
-\frac{121}{1440}P_5^2P_2
-\frac{737}{10616832}P_2^6
\right),
\end{split}
\end{equation}
$$
x_2^{(35)}=\frac{1}{14\sqrt{3}}\left(P_9-\frac{7}{120}P_5P_2^2\right),
\quad
x_3^{(35)}=\frac{3}{40\sqrt{2}}\left(P_8-\frac{7}{24}P_6P_2+\frac{385}{248832}P_2^4
\right),
$$
$$
x_4^{(35)}=\frac{1}{8\sqrt{6}}\left(
P_6-\frac{5}{576}P_2^3\right)
,\quad
x_5^{(35)}=-\frac{1}{20}P_5,\quad 
x_6^{(35)}=\frac{1}{24} P_2~.
$$
Here 
\begin{equation}\nonumber
P_m=\sum_{i=1}^{27} l_i^m\quad (m=12,9,8,6,5,2),
\end{equation}
where $l_1,\ldots, l_{27}$ are following $27$ linear polynomials \cite[\S 3]{Talamini2018}
(see also \cite{Mehta1988}):
\begin{equation}\nonumber
\begin{split}
&2\sqrt{\frac{2}{3}}u_6,\quad
\sqrt{\frac{2}{3}}(\pm \sqrt{3}{u}_5-u_6),
\\
&\pm u_2\pm u_4+\frac{\sqrt{3}u_5+u_6}{\sqrt{6}},\quad
\pm u_1\pm u_3+\frac{\sqrt{3}u_5+u_6}{\sqrt{6}},
\\
&\pm u_2\pm u_3-\frac{\sqrt{3}u_5-u_6}{\sqrt{6}},\quad
\pm u_1\pm u_4-\frac{\sqrt{3}u_5-u_6}{\sqrt{6}},
\\
&\pm u_3\pm u_4-\sqrt{\frac{2}{3}}u_6,\quad
\pm u_1\pm u_2-\sqrt{\frac{2}{3}}u_6.
\end{split}
\end{equation}

The potential vector field is  given as follows. For the  simplicity of expression, we write $x_{\alpha}$ instead of $x_{\alpha}^{(35)}$ here.
\begin{equation}\nonumber
\mathcal{G}_{\gamma}^{(35)}=\frac{\partial }{\partial x_{7-\gamma}}\mathcal{F}_{E_6}
 \quad (1\leq \gamma\leq 6),
\end{equation}
where
\begin{equation}\begin{split}\label{pot35}
\mathcal{F}_{E_6}&=\frac{1}{2} x_1^2 x_6
+x_1x_3 x_4 +x_1 x_2 x_5
\\&+\frac{x_2^2 x_3}{2 \sqrt{2}}
+\frac{1}{2} \sqrt{\frac{3}{2}} x_2^2 x_4 x_6
+\frac{1}{8} x_2^2 x_6^4
+\sqrt{\frac{3}{2}} x_2 x_3 x_5 x_6^2
\\&\quad 
+\frac{\sqrt{3}}{2}  x_2 x_4^2x_5
+\frac{x_2 x_4 x_5 x_6^3}{\sqrt{2}}
+\frac{x_2 x_5^3 x_6}{\sqrt{3}}
\\&+\frac{x_3^3 x_6}{3\sqrt{2}}
 +\frac{1}{2}x_3^2 x_5^2
+\frac{1}{10} x_3^2x_6^5
+\frac{x_3 x_4^2x_6^3}{\sqrt{2}}
+\sqrt{3} x_3 x_4 x_5^2x_6
+\frac{x_3 x_5^2 x_6^4}{2 \sqrt{2}}
\\&
  +\frac{1}{4} x_4^4 x_6
 +\frac{3}{2} x_4^2 x_5^2x_6^2
 +\frac{1}{14} x_4^2 x_6^7
  +\frac{x_4 x_5^4}{\sqrt{6}}
+\frac{1}{2} \sqrt{\frac{3}{2}} x_4 x_5^2 x_6^5
\\&
  +\frac{1}{2} x_5^4 x_6^3
+\frac{1}{16} x_5^2 x_6^8
   +\frac{x_6^{13}}{1716}~.
 \end{split}\end{equation}

\begin{remark}
For $E_6$, Saito--Yano--Sekiguchi obtained the system of flat generators $y_m$ ($m=12,9,8,6,5,2$)
in \cite[(3.3)]{SaitoYanoSekiguchi}. They are  expressed in terms of the set of invariants 
$A,B,C,H,J,K$
obtained in \cite{Frame1951}.   If we identifiy the linear  coordinates $(u_1,\ldots,u_6)\in \mathbb{C}^6$ here 
and $(x_1,x_2,x_3,y_1,y_2,y_3)\in \mathbb{C}^6$ in \cite{Frame1951} as 
\begin{equation}\nonumber\begin{split}
&u_1\to \frac{x_1+x_2+x_3}{\sqrt{3}},\quad
u_2\to y_3,\quad u_3\to y_1,\quad u_4\to y_2,\\
&u_5\to \frac{-x_1-x_2+2x_3}{\sqrt{6}},\quad
u_6\to \frac{x_1-x_2}{\sqrt{2}},
\end{split}\end{equation}
then 
\begin{equation}\nonumber
\begin{split}
P_{12}&=360K+1000 HA^2+32C^2+192CA^3+4480B^2A+12A^6,\\
P_9&=168\sqrt{3}J+336\sqrt{3}BA^2,\quad 
P_8=80H+64CA+12A^4,\\
P_6&=24C+12A^3,\quad P_5=40\sqrt{3}B,\quad P_2=12A,
\end{split}
\end{equation}
and our set of good basic invariants and $y_m$'s are related as follows.
\begin{equation}\nonumber
\begin{split}
x_1^{(35)}&=6\sqrt{3}y_{12},\quad
x_2^{(35)}=12 y_9,\quad
x_3^{(35)}=3\sqrt{2}y_8,
\\
x_4^{(35)}&=\sqrt{\frac{3}{2}}y_6,\quad
x_5^{(35)}=2\sqrt{3}y_5,\quad
x_6^{(35)}=\frac{y_2}{2}.
\end{split}
\end{equation}
\end{remark}

\begin{remark}
In \cite[\S 3]{Abriani2009},
Abriani obtained a set of flat invariants $t_{m}$ $(m=12,9,8,6,5,2)$ and  the potential of $E_6$.
Our set of good basic invariants and his is related as follows.\footnote{
However, this is under the assumption that the invariant polynomials $P_{m}$ ($m=12,9,8,6,5,2$) here 
correspond to the counterparts  $u_m$ in \cite{Abriani2009} as
$$P_{12}\to u_{12},~~
P_{9}\to u_9,~~
P_{8}\to u_8,~~
P_{6}\to u_6,~~ 
P_5\to u_5,~~ 
P_2\to 12 u_2.$$ 
}
\begin{equation}\nonumber\begin{split}
x_1^{(35)}&=\frac{t_{12}}{20\sqrt{3}},\quad
x_2^{(35)}=\frac{t_9}{2\sqrt{3}},\quad
x_3^{(35)}=\frac{\sqrt{2}}{5}t_8, \\
x_4^{(35)}&=\frac{t_6}{8\sqrt{6}}, \quad
x_5^{(35)}=\frac{t_5}{20}, \quad
x_6^{(35)}=\frac{t_2}{2}.
\end{split}\end{equation}
Under this correspondence,
$200\mathcal{F}_{E_6}$  agrees with his potential.
\end{remark}
%%%%%%%%%%%%%%%%%%%%%%%%%%%%%%%
\subsection{An admissible triplet, good basic invariants 
and the potential vector field of $F_4=G_{28}$}
%%%%%%%%%%%%%%%%%%%%%%%%%%%%%%%
The finite Coxeter group $F_4=G_{28}$  has  rank $4$ and degrees $12,8,6,2$.
All reflections have order two.
We take the following simple roots:
\begin{equation}\begin{split}\nonumber
\alpha_1^{(28)}&=e_1-e_2,\quad
\alpha_2^{(28)}=e_2-e_3,\quad
\alpha_3^{(28)}=\sqrt{2}e_3,
\\
\alpha_4^{(28)}&=-\frac{1}{\sqrt{2}}(e_1+ e_2+ e_3+ e_{4})~.
\end{split}
\end{equation}
Then
$g_{(28)}=s(\alpha_1^{(28)})s(\alpha_2^{(28)})s(\alpha_3^{(28)})s(\alpha^{(28)}_4)
$ is a Coxeter element. Its
eigenvalues and eigenvectors are
\begin{equation}\nonumber
\begin{split}
\zeta:~~& q_1^{(28)}=\frac{1}{2\sqrt{3-\sqrt{3}}}
\begin{bmatrix}
-1+i\\ i (\sqrt{3}-1)\\ \sqrt{3}-1\\ -1-i
\end{bmatrix},
\quad 
\zeta^{5}:~~
q_2^{(28)}=\frac{1}{2\sqrt{3+\sqrt{3}}}
\begin{bmatrix}
-1+i\\ -i (1+\sqrt{3})\\ -1-\sqrt{3}\\ -1-i
\end{bmatrix},
\\
\zeta^{7}:~~&
q_3^{(28)}=\frac{1}{2\sqrt{3+\sqrt{3}}}
\begin{bmatrix}
1-i\\-1-\sqrt{3}\\ -i (1+\sqrt{3})\\ -1-i
\end{bmatrix},
\quad
\zeta^{11}:~~
q_4^{(28)}=
\frac{1}{2\sqrt{3-\sqrt{3}}}\begin{bmatrix}
1-i\\ \sqrt{3}-1\\ i (\sqrt{3}-1)\\ -1-i
\end{bmatrix}~.
\end{split}
\end{equation}
Then $(g_{(28)},\zeta, q_1^{(28)})$ is an admissible triplet of $G_{28}=F_4$.
The set of basic invariants which is good with respect to 
$(g_{(28)},\zeta,q_1^{(28)})$ and which is compatible at $q_1^{(28)}$ with 
the $(g_{(28)},\zeta)$-graded coordinates associated with $q_{\alpha}^{(28)}$'s 
is given as follows.
\begin{equation}
\begin{split}
x_1^{(28)}&=\frac{1}{10\sqrt{3}}
\left(
I_{12}+\frac{2959}{20736} I_6 I_2^3-\frac{77}{288}I_6^2-\frac{209}{576}I_8 I_2^2
-\frac{737}{331776} I_2^6
\right),
\\
x_2^{(28)}&=\frac{3}{20\sqrt{2}}\left(
I_8-\frac{7}{12}I_6I_2+\frac{385}{31104}I_2^4
\right),
\\
x_3^{(28)}&=-\frac{i}{4\sqrt{6}}\left(I_6-\frac{5}{144}I_2^3\right),
\\
x_4^{(28)}&=-\frac{i}{12}I_2~.
\end{split}
\end{equation}
These agree with the one given in \cite[(4.3)]{SaitoYanoSekiguchi}
\footnote{As pointed out in \cite{FeiginVeselov}, there are typos in the generators  given
in \cite[(4.3)]{SaitoYanoSekiguchi}.
The invariants $y_6$ and  $y_{12}$ there should be read as
$$y_6=-\frac{1}{8}I_6+\frac{15}{16}\left(\frac{I_2}{6}\right)^3,\qquad
y_{12}=-\frac{1}{60}I_{12}+\cdots + \frac{2211}{1280} \left(\frac{I_2}{6}\right)^6.$$}
up to constant multiples.
Here \cite{Mehta1988} (see also \cite[\S 4]{Iwasaki2002})
\begin{equation}\nonumber
I_{2k}=(8-2^{2k-1})S_{2k}+\sum_{i=1}^{k-1} \begin{pmatrix}2k\\2i\end{pmatrix}
S_{2i}S_{2k-2i}\quad (k=6,4,3,1),
\end{equation}
where
$$
S_m=u_1^m+u_2^m+u_3^m+u_4^m~.
$$

The potential vector field is given as follows.
 For the sake of simplicity, we omit the superscript from $x_{\alpha}^{(28)}$ here.
$$
\mathcal{G}_{\gamma}^{(28)}=\frac{\partial }{\partial x_{5-\gamma}}\mathcal{F}_{F_4}\quad
(1\leq \gamma\leq 4),
$$
where
\begin{equation}\label{pot28}
\begin{split}
\mathcal{F}_{F_4}&=
\frac{1}{2} x_1^2 x_4+x_1 x_2 x_3
+\frac{x_2^3 x_4}{3\sqrt{2}}
+\frac{1}{10} x_2^2 x_4^5
+\frac{x_2 x_3^2 x_4^3}{\sqrt{2}}
+\frac{1}{4} x_3^4 x_4
+\frac{1}{14} x_3^2 x_4^7
+\frac{x_4^{13}}{1716}~.
\end{split}
\end{equation}

%%%%%%%%%%%%%%%%%%%%%%%%%%%%%%%
\subsection{An admissible triplet, good basic invariants
and the potential vector field of $G_{25}$}
%%%%%%%%%%%%%%%%%%%%%%%%%%%%%%%%
The duality group $G_{25}$ has  rank $3$ and degrees $12,9,6$.
Every reflection of $G_{25}$ has order three. 
The line system $\mathcal{L}_3$ of $G_{25}$ consists of $12$ lines.
Nine of them are the orbit of $\mathbb{C}(e_1+e_2+e_3)$
by $G(3,1,3)$. The remaining $3$  are the coordinate axis \cite[\S 8.5.3]{LehrerTaylor}.
Among these lines,
we choose the three lines spanned by:
\begin{equation}\nonumber
\alpha_1^{(25)}=\begin{bmatrix}0\\0\\1\end{bmatrix},\quad
\alpha_2^{(25)}=\begin{bmatrix}\omega\\\omega^2\\1\end{bmatrix},\quad
\alpha_3^{(25)}=\begin{bmatrix}1\\0\\0\end{bmatrix}~.
\end{equation}
Let
$g_{(25)}=s(\alpha_1^{(25)},\omega)
s(\alpha_2^{25},\omega)s(\alpha_3^{(25)},\omega).
$
Its eigenvalues and 
eigenvectors are
\begin{equation}
\begin{split}\nonumber
\zeta:~~ &q_1^{(25)}
=\frac{1}{\sqrt{6+2\sqrt{3}}}\begin{bmatrix}1\\1+\sqrt{3}\\1\end{bmatrix}
,\quad
\zeta^{4}=\omega:~~
q_2^{(25)}=\frac{1}{\sqrt{2}}
\left[\!\!\!\begin{array}{r}1\\0\\-1\end{array}\!\!\right],
\\
\zeta^{7}:~~&
q_3^{(25)}
=\frac{1}{\sqrt{6-2\sqrt{3}}}
\begin{bmatrix}1\\1-\sqrt{3}\\1\end{bmatrix}
.
\end{split}
\end{equation}
We take $(g_{(25)},\zeta, q_1^{(25)})$ as an admissible triplet of $G_{25}$
and the coordinates  $z_{(25)}$ dual to $q_{\alpha}^{(25)}$'s as $(g_{(25)},\zeta)$-graded coordinates.
The set of basic invariants which is good with respect to $(g_{(25)},\zeta,
q_1^{(25)})$ and which is compatible with $z_{(25)}$ at $q_1^{(25)}$ is as follows.
\begin{equation}\begin{split}\nonumber
x_1^{(25)}=\frac{8\sqrt{3}}{81}\left(C_{12}-\frac{5}{8}C_6^2\right) ,
\quad
x_2^{(25)}=\frac{32\sqrt{2}}{9} C_9,
\quad
x_3^{(25)}=-\frac{\sqrt{6}}{9} C_6~,
\end{split}
\end{equation}
where \cite[eq.(9)]{Maschke1889}
\begin{equation}\nonumber
\begin{split}
C_{12}&=(u_1^3+u_2^3+u_3^3) \big((u_1^3+u_2^3+u_3^3)^3+216u_1^3u_2^3u_3^3\big),
\\
C_9&=(u_1^3-u_2^3)(u_2^3-u_3^3)(u_3^3-u_1^3),
\\
C_6&=u_1^6+u_2^6+u_3^6-10(u_1^3u_3^3+u_2^3u_3^3+u_3^3u_1^3)~.
\end{split}
\end{equation}
The potential vector field is as follows.
As in the previous subsections, 
we omit the superscript from $x_{\alpha}^{(25)}$ in the equation below.
\begin{equation}\label{pot25}
\begin{split}
\mathcal{G}_1^{(25)}&
=\frac{x_1^2}{2}-\frac{1}{2} \sqrt{\frac{3}{2}} x_2^2 x_3
+\frac{x_3^4}{4},
\\
\mathcal{G}_2^{(25)}&=x_1 x_2-\frac{\sqrt{3}}{2}  x_2 x_3^2,
\\
\mathcal{G}_3^{(25)}&=x_1 x_3+\frac{x_2^2}{2 \sqrt{2}}~.
\end{split}
\end{equation}
These agree with the flat coordinates and the vector potential of $G_{25}$ obtained in 
\cite[\S 5.3]{Arsie-Lorenzoni2016}.

%%%%%%%%%%%%%%%%%%%%%%%%%%%%%%%
\subsection{An admissible triplet, good basic invariants
and the potential vector field of $G_{8}$}
%%%%%%%%%%%%%%%%%%%%%%%%%%%%%%%%
The duality group $G_8$ has rank two and degrees $12,8$.
It is generated by \cite[\S 6.3]{LehrerTaylor}
\begin{equation}\nonumber
r_4=\begin{bmatrix}1&0\\0&i\end{bmatrix},\quad
r_4'=\frac{1}{2}\left[\!\!\!
\begin{array}{rr}1+i&-1+i\\-1+i&1+i\end{array}
\!\! \right].
\end{equation}
We take $g_{(8)}=r_4r_4'$. Its eigenvalues and  eigenvectors are
\begin{equation}\nonumber
\zeta:~~  q_1^{(8)}=\frac{1}{\sqrt{6+2\sqrt{3}}}
\begin{bmatrix}
1+\sqrt{3}\\-1-i
\end{bmatrix},
\quad
\zeta^{5}:~~ q_2^{(8)}=\frac{1}{\sqrt{6-2\sqrt{3}}}
\begin{bmatrix}
\sqrt{3}-1\\1+i
\end{bmatrix}~.
\end{equation}
Then $(g_{(8)},\zeta,q_1^{(8)})$ is an admissible triplet of $G_8$.
The graded coordinate system dual to $q_1^{(5)},q_2^{(5)}$ is denoted $z_{(5)}$.
The set of  basic invariants 
which is good with respect to $(g_{(8)},\zeta, q_1^{(8)})$
and which is compatible with $z_{(8)}$ at $q_1^{(8)}$ 
is given by
$$
x_1^{(8)}=\frac{\sqrt{3}}{16}t_O,\quad
 x_2^{(8)}=\frac{3}{8\sqrt{2}}h_O~,
$$
where \cite[\S 6.6]{LehrerTaylor}
\begin{equation}\label{tO-hO}
t_O=u_1^{12}-33u_1^8u_2^4-33u_1^4u_2^8+u_2^{12},
\quad
h_O=u_1^8+14u_1^4u_2^4+u_2^8~.
\end{equation}
The potential vector field is 
\begin{equation}\label{pot8}
\mathcal{G}_1^{(8)}=\frac{x_1^2}{2}+\frac{x_2^3}{3\sqrt{2}},\quad
\mathcal{G}_2^{(8)}=x_1x_2~.
\end{equation}
In the above, we omit the superscript from $x_{\alpha}^{(8)}$ for simplicity.
These results agree with \cite{Arsie-Lorenzoni2016} and
\cite[Table C7]{KMS2018}.

%%%%%%%%%%%%%%%%%%%%%%%%%%%%%%%%
\subsection{An admissible triplet, good basic invariants 
and the potential vector field of $G_5$}
%%%%%%%%%%%%%%%%%%%%%%%%%%%%%%%%
The duality group
$G_5$ has rank two and degrees $12,6$. It is generated by \cite[\S 6.2]{LehrerTaylor}
\begin{equation}\nonumber
r_1=\frac{\omega}{2}\left[\!\!\!
\begin{array}{rr}
-1-i&1-i\\
-1-i&-1+i
\end{array}\!\!\right]
~,\quad
r_2'=\frac{\omega}{2}
\left[\!\!\!
\begin{array}{rr}
-1+i&1-i\\
-1-i&-1-i
\end{array}\!\!\right]
~.
\end{equation}
We take $g_{(5)}=(r_2'r_1)^{-1}$. 
Eigenvalues  and  eigenvectors of $g_{(5)}$ are
$$
\zeta: ~~ q_1^{(5)}=
\frac{1}{\sqrt{2}}\begin{bmatrix}1\\1
\end{bmatrix}~,
\quad 
\zeta^7:~~ q_2^{(5)}=
\frac{1}{\sqrt{2}}
\left[\!\!\!\begin{array}{r}
-1\\1
\end{array}\!\!\!\right]~.
$$
Then $(g_{(5)},\zeta,q_1^{(5)})$ is an admissible triplet of $G_5$.
The graded coordinate system dual to $q_1^{(5)},q_2^{(5)}$ is denoted $z_{(5)}$.
The set of basic invariants which is good with respect to $(g_{(5)},\zeta,q_1^{(5)})$
and which is compatible with $z_{(5)}$ at $q_1^{(5)}$ is given as follows.
$$
x_1^{(5)}=-\frac{1}{12}(f_T^3-6i\sqrt{3}t_T^2),\quad
x_2^{(5)}=
-t_T
$$
where \cite[\S 6.6]{LehrerTaylor}
\begin{equation}\nonumber 
f_T=u_1^4+2i\sqrt{3}u_1^2u_2^2+u_2^4,\quad
t_T=u_1^5 u_2-u_1 u_2^5~.
\end{equation}
The potential vector field is 
\begin{equation}\label{pot5}
\mathcal{G}_1^{(5)}=\frac{x_1^2}{2}-\frac{x_2^4}{4},\quad
\mathcal{G}_2^{(5)}=x_1x_2~.
\end{equation}
In the above, we omit the superscript from $x_{\alpha}^{(5)}$ for simplicity.
These results agree with \cite{Arsie-Lorenzoni2016} and
\cite[Table C7]{KMS2018}.

%%%%%%%%%%%%%%%%%%%%%
\subsection{The $2$-reflection subquotient of 
$G_{35}=E_6$ is $G_{28}=F_4$}
%%%%%%%%%%%%%%%%%%
Consider the reflection subquotient  of $G_{35}=E_6$ by $\delta=2$ with $2$-regular element $g_{(35)}^6$. Among the degrees of $G_{35}$, $d_1=12,d_3=8,d_4=6,d_6=2$ are divisible by $\delta=2$.
Therefore the $\zeta^6$-eigenspace $V(g_{(35)}^6,\zeta^6)$ 
 of $g_{(35)}^6$ 
 is spanned by
$q_1^{(35)},q_3^{(35)},q_4^{(35)},q_6^{(35)}$. 
Let us consider  the vector space isomorphism  $V(g_{(35)}^6,\zeta^6)\to \mathbb{C}^4$ given by
\begin{equation}\label{map35-28}
q_1^{(35)}\mapsto q_1^{(28)},\quad q_3^{(35)}\mapsto q_2^{(28)},\quad
q_4^{(35)}\mapsto q_3^{(28)},\quad q_6^{(35)}\mapsto q_4^{(28)}.
\end{equation}

Under this isomorphism, the reflection subquotient
$G_{35}(2)$ maps to $GL_4(\mathbb{C})$.
We show that this map gives an isomorphism between $G_{35}(2)$ and $G_{28}$.
Since the degrees of the both groups coincide, the orders are the same.
Therefore we only have to show that,
for each generator $s(\alpha_i^{(28)})$  ($1\leq i\leq 4$) of $G_{28}$, 
there exists an element of $G_{35}(2)$ which is mapped to $s(\alpha_i^{(28)})$.
As it turns out, 
\begin{equation}
\begin{split}\label{map35-28-2}
s(2e_3)&\mapsto s(\alpha_1^{(28)}),\\
s\left(e_2+e_4-\frac{1}{\sqrt{2}}e_5+\sqrt{\frac{3}{2}}e_6\right)
&\mapsto s(\alpha_2^{(28)}),
\\
s(2e_4)\circ s(2e_2)&\mapsto s(\alpha_3^{(28)}),\\
s(e_1-e_2-\sqrt{2}e_5)\circ
s\left(e_1-e_4+\frac{1}{\sqrt{2}}e_5+\sqrt{\frac{3}{2}}e_6\right)
&\mapsto s(\alpha_4^{(28)})~.
\end{split}
\end{equation}
Therefore $G_{35}(2)\cong G_{28}$.

Under the isomorphism \eqref{map35-28},
the good basic invariants of $G_{35}$ maps to those of $G_{28}$ as follows:
\begin{equation}\nonumber\begin{split}
&x_1^{(35)}\mapsto x_1^{(28)},\quad x_2^{(35)}\mapsto 0, \quad x_3^{(35)}\mapsto x_2^{(28)},\quad
\\
&x_4^{(35)}\mapsto x_3^{(28)},\quad x_5^{(35)}\mapsto 0,\quad  x_6^{(35)}\mapsto x_4^{(28)}.
\end{split}
\end{equation}
If we substitute these into the potential function $\mathcal{F}_{E_6}$, 
we obtain $\mathcal{F}_{F_4}$. Compare \eqref{pot35} and \eqref{pot28}.

%%%%%%%%%%%%%%%%%%
\subsection{The $3$-reflection subquotient $G_{35}=E_6$ is $G_{25}$}
%%%%%%%%%%%%%%%%%%
We consider the reflection subquotient of $G_{35}=E_6$ by $\delta=3$ with $3$-regular element $g_{(35)}^4$.
Among the degrees of $G_{35}=E_6$, $d_1=12,d_2=9,d_4=6$ are divisible by $\delta=3$.
Therefore the $\zeta^4$-eigenspace $V(g_{(35)}^4,\zeta^4)$ of $g_{(35)}^4$  is spanned by
$q_1^{(35)},q_2^{(35)},q_4^{(35)}$. 
Let us consider the vector space
 isomorphism  $V(g_{(35)}^4,\zeta^4)\to \mathbb{C}^3$ given by
\begin{equation}\label{map35-25}
q_1^{(35)}\mapsto e^{\frac{5\pi i}{4}}q_1^{(25)},\quad 
q_2^{(35)}\mapsto q_2^{(25)},\quad
q_4^{(35)}\mapsto e^{\frac{3\pi i}{4}}q_3^{(25)}.
\end{equation}

Under \eqref{map35-25},  we show that the reflection subquotient $G_{35}(3)$ is mapped to
$G_{25}\subset GL_3(\mathbb{C})$ isomorphically.
As in the case of the reduction from $G_{35}$ to $G_{28}$,
it is enough to show that there exists an element of $G_{35}$ which is mapped to 
each generator of $G_{25}$.  Indeed we have 
\begin{equation}\begin{split}
\nonumber 
s\left(
e_2-e_3+\frac{1}{\sqrt{2}}e_5+\sqrt{\frac{3}{2}}e_6
\right)\circ
s\left(
e_2+e_3-\frac{1}{\sqrt{2}}e_5-\sqrt{\frac{3}{2}}e_6
\right)
&\mapsto s (\alpha_1^{(25)},\omega),
\\
s(e_3-e_4-\sqrt{2}e_5)\circ
s(e_1-e_2-e_3+e_4)&\mapsto s(\alpha_2^{(25)},\omega ),
\\
s(2e_4)\circ s(e_3-e_4+\sqrt{2}e_5)&\mapsto 
s(\alpha_3^{(25)},\omega).
\end{split}
\end{equation}
Therefore $G_{35}(3)$ is isomorphic to $G_{25}$.

Notice that  $q_{\alpha}^{(35)}\mapsto a_{\alpha}q_{\beta}^{(25)}$ implies the correspondence
of the graded coordinates
$z_{\alpha}^{(35)}\mapsto a_{\alpha}^{-1}z_{\beta}^{(25)}$ ($\alpha=1,2,4$)
and that of good basic invariants
\begin{equation}\label{x-change}
x_{\alpha}^{(35)} \mapsto a_1^{-d_{\alpha}+1}a_{\alpha}^{-1}x_{\beta}^{(25)}~
(\alpha=1,2,4).
\end{equation}
See \cite[Lemma 4.4]{KM2023} for the reason of the factor in the right hand side.
Then, by Remark \ref{remark-pvf}, 
\begin{equation}\label{pvf-change}
\mathcal{G}_{\beta}^{(25)}=a_1^{d_1+d_{\alpha}-1}a_{\alpha}\mathcal{G}_{\alpha}^{(35)}.
\end{equation}

Therefore under the map \eqref{map35-25},  we have
\begin{equation}\nonumber\begin{split}
&x_1^{(35)}\mapsto  (e^{\frac{5\pi i}{4}})^{-12}\cdot x_1^{(25)}=-x_1^{(25)},\quad 
\\&x_2^{(35)}\mapsto (e^{\frac{5\pi i}{4}})^{-9+1}x_2^{(25)}=x_2^{(25)},  \quad 
\\&x_4^{(35)}\mapsto  (e^{\frac{5\pi i}{4}})^{-6+1}\cdot e^{-\frac{3\pi i}{4}}\cdot x_3^{(25)}=-x_3^{(25)},\\
&x_3^{(35)},x_5^{(35)},x_6^{(35)}\mapsto 0~.
\end{split}
\end{equation}
Comparing \eqref{pot35} and \eqref{pot25}.
we  see that
\begin{equation}\nonumber
\begin{split}
&(e^{\frac{5\pi i}{4}})^{12+12}\mathcal{G}^{(35)}_1=\mathcal{G}_1^{(35)}\to \mathcal{G}_1^{(25)},\\
&(e^{\frac{5\pi i}{4}})^{12+9-1}\mathcal{G}_2^{(35)}=-\mathcal{G}_2^{(35)}\to \mathcal{G}_2^{(25)},
\\
&(e^{\frac{5\pi i}{4}})^{12+6-1}\cdot e^{\frac{3\pi i}{4}}\mathcal{G}_4^{(35)}=\mathcal{G}_4^{(35)}\to \mathcal{G}_3^{(25)}~.
\end{split}
\end{equation}

%%%%%%%%%%%%%%%%%%%%%%%%%%%%%%%%%
\subsection{The $6$-reflection subquotient  of $G_{28}=F_4$ is  $G_5$}
%%%%%%%%%%%%%%%%%%%%%%%%%%%%%%%%%
We consider the reflection quotient of $G_{28}=F_4$ by $\delta=6$ with $6$-regular element $g_{(28)}^2$. 
Among the degrees of $G_{28}=F_4$, $d_1=12$ and $d_3=6$ are divisible by $\delta=6$.
Therefore the $\zeta^2$-eigenspace $V(g_{(28)}^2,\zeta^2)$  of $g_{(28)}^2$  is spanned by $q_1^{(28)},q_3^{(28)}$. 
Let us consider the vector space isomorphism
($V(g_{(28)}^2,\zeta^2)\to \mathbb{C}^2$ given by
\begin{equation}\label{map28-5}
q_1^{(28)}\mapsto e^{\frac{5\pi i}{4}}q_1^{(5)},\quad q_3^{(28)}\mapsto iq_2^{(5)}.
\end{equation}

Under the isomorphism, we show that $G_{28}(6)$ is mapped to $G_5\subset GL_2(\mathbb{C})$
isomorphically.
\footnote{Given the rank and the degrees, we have two candidates 
 $G_5$ and $G(6,1,2)$ for the $6$-reflection subquotient $G_{28}(6)$ of $G_{28}$.
We show that $G_{28}(6)\cong G_5$ by constructing the isomorphism explicitly.
The case of  $G_{25}(6)$ is similar.
}
As in the previous cases, we only have to show that there exists an element of
$G_{28}(6)$ which is mapped to each generator of $G_5$. This can be achieved by the following:
\begin{equation}\begin{split}\label{map28-5-2}
s(e_1-e_2)\circ s(e_1+e_3)
&\mapsto r_1,
\\
s(\sqrt{2}e_2)\circ 
s\left(\frac{1}{\sqrt{2}}(e_1-e_2+e_3+e_4)\right)
&\mapsto r_2'~.
\end{split}
\end{equation}
Thus $G_{28}(6)\cong G_5$.

Under the map \eqref{map28-5}, 
the good basic invariants of $G_{28}$ and $G_{5}$ correspond as follows.
\begin{equation}\nonumber\begin{split}
&x_1^{(28)}\mapsto  (e^{\frac{5\pi i}{4}})^{-12}\cdot x_1^{(5)}=-x_1^{(5)},\quad 
x_3^{(28)}\mapsto  (e^{\frac{5\pi i}{4}})^{-6+1} \cdot i ^{-1}\cdot x_2^{(5)}
 =e^{\frac{\pi i}{4}}\cdot x_2^{(5)},
\\&x_2^{(28)},x_4^{(28)}\mapsto 0~.
\end{split}
\end{equation}
Substituting  these into \eqref{pot28} and comparing \eqref{pot5},  we see that
\begin{equation}\nonumber
\begin{split}
&(e^{\frac{5\pi i}{4}})^{12+12}\mathcal{G}_1^{(28)}=\mathcal{G}_1^{(28)}\to \mathcal{G}_1^{(5)},\\
&
(e^{\frac{5\pi i}{4}})^{12+6-1}\cdot i\mathcal{G}_3^{(28)}=e^{\frac{7\pi i}{4}}\mathcal{G}_3^{(28)}
\to \mathcal{G}_1^{(5)}.
\end{split}
\end{equation}

%%%%%%%%%%%%%%%%%%%%%%%%%%%%%%%%%
\subsection{The $4$-reflection subquotient $G_{28}=F_4$ is $G_8$}
%%%%%%%%%%%%%%%%%%%%%%%%%%%%%%%%%
We consider the reflection quotient of $G_{28}$ by $\delta=4$ with $4$-regular element $g_{(28)}^3$.
Among the degrees of $G_{28}=F_4$, $d_1=12$ and $d_2=8$ are divisible by $\delta=4$.
Therefore the $\zeta^3$-eigenspace $V(g_{(28)}^3,\zeta^3)$  
 of $g_{(28)}^3$  is spanned by
$q_1^{(28)},q_2^{(28)}$. 
Let us consider the vector space isomorphism  $V(g_{(28)}^3,\zeta^3)\to \mathbb{C}^2$ given by
\begin{equation}\label{map28-8}
q_1^{(28)}\mapsto q_1^{(8)},\quad q_2^{(28)}\mapsto q_2^{(8)}.
\end{equation}

Under this isomorphism, there exists an element of $G_{28}(4)$ which is mapped to each generator of $G_8$ as follows:
\begin{equation}\begin{split}
\nonumber 
s(e_2-e_3)\circ s(\sqrt{2}e_3)
&\mapsto r_4,
\\
s(e_1+e_3)\circ 
s\left(-\frac{1}{\sqrt{2}}(e_1+e_2+e_3+e_4)\right)
&\mapsto r_4'~.
\end{split}
\end{equation}
Therefore $G_{28}(4)\cong G_8$.

Under the map \eqref{map28-8}, the good basic invariants of $G_{28}$ and $G_8$ correspond
as follows.
$$
x_1^{(28)}\mapsto x_1^{(8)},\quad x_2^{(28)}\mapsto x_2^{(8)}, \quad 
x_3^{(28)},x_4^{(28)}\mapsto 0.
$$
If we substitute these into
the potential vector field of $G_{28}=F_4$,
we obtain
that of $G_8$.
See \eqref{pot28}\eqref{pot8}.

%%%%%%%%%%%%%%%%%%%%%%%%%%%%%%%%%
\subsection{The $6$-reflection subquotient of $G_{25}$ is $G_5$}
%%%%%%%%%%%%%%%%%%%%%%%%%%%%%%%%%%
Finally let us consider the reflection subquotient of $G_{25}$ by $\delta=6$ with the $6$-regular element $g_{(25)}^2$.
Among the degrees of $G_{25}$, $d_1=12$ and $d_3=6$ are divisible by $\delta=6$.
Therefore the $\zeta^2$-eigenspace  $V(g_{(25)}^2,\zeta^2)$ of $g_{(25)}^2$ is spanned by
the eigenvectors
$q_1^{(25)},q_3^{(25)}$.  
Let us consider  the isomorphism  $V(g_{(25)}^2,\zeta^2)\to \mathbb{C}^2$ given by
\begin{equation}\label{map25-5}
q_1^{(25)}\mapsto q_1^{(5)},\quad q_3^{(25)}\mapsto e^{\frac{\pi i}{4}}q_2^{(5)}.
\end{equation}
Under the isomorphism, the following elements of $G_{25}(6)$ are mapped to the generators of 
$G_5$ as follows:
\begin{equation}\begin{split}
\nonumber 
s\left(\begin{bmatrix}1\\ \omega^2\\ 1\end{bmatrix},\omega\right)&\mapsto (r_2')^2=(r_2')^{-1},
\quad
s\left(\begin{bmatrix}1\\ 0\\ 0\end{bmatrix},\omega\right)
\circ
s\left(\begin{bmatrix}0\\ 0\\ 1\end{bmatrix},\omega\right)
\mapsto r_1^2=r_1^{-1}~.
\end{split}
\end{equation}
Thus $G_{25}(6)\cong G_5$.

Under the map \eqref{map25-5}, 
the good basic invariants of $G_{25}$ and $G_{5}$ correspond as follows.
$$
x_1^{(25)}\mapsto  x_1^{(5)},\quad 
x_2^{(25)}\mapsto 0,  \quad 
x_3^{(25)}\mapsto   e^{-\frac{\pi i}{4}}x_2^{(5)}.
$$
Substitute these into \eqref{pot25} and comparing with \eqref{pot5}, we see that
\begin{equation}\nonumber\begin{split}
&\mathcal{G}_1^{(25)}\to \mathcal{G}_1^{(5)},\quad
e^{\frac{\pi i}{4}}\mathcal{G}_3^{(25)}\to \mathcal{G}_2^{(5)}~.
\end{split}\end{equation}

%%%%%%%%%%%%%%%%%%%%%%%%%%
\section{Sequence of reflection subquotients of $G_{31}$}
\label{sec:G31}
%%%%%%%%%%%%%%%%%%%%%%%%%%
In this section,
$$
\zeta=e^{\frac{\pi i}{12}}~,
$$
and $t_O$, $h_O$ are the same as \eqref{tO-hO}.

%%%%%%%%%%%%%%%%%%%%%%%%%%%%%%%%%%%%%%%%
\subsection{Admissible triplet and good basic invariants of $G_{31}$}
%%%%%%%%%%%%%%%%%%%%%%%%%%%%%%%%%%%%%%%%
$G_{31}$ is a non-duality group of rank $4$ with degrees $24,20,12,8$.
Among $60$ roots of $G_{31}$,
$28$ are those of $G(4,2,4)$:
$$
e_{\alpha}-i^k e_{\beta}~~(0\leq \alpha<\beta\leq 4,0\leq k<4),\quad 
e_{\alpha}~~(1\leq \alpha\leq 4)~,
$$
where $e_1,\ldots, e_4$ is the standard basic of $\mathbb{C}^4$.
The remaining $32$ roots are the $G(4,2,4)$-orbit of 
$e_1+e_2+e_3+e_4$.
Reflections of $G_{31}$ are the reflections of order $2$ with these roots. See \cite[\S 6.2, The line system $\mathcal{O}_4$]{LehrerTaylor}.
We take the following five roots
\begin{equation}
\begin{split}
\alpha^{(31)}_1=\begin{bmatrix}0\\0\\0\\1\end{bmatrix},~~
\alpha^{(31)}_2=\left[\!\!\begin{array}{r}1\\-1\\0\\0\end{array}\!\!\right],~~
\alpha^{(31)}_3=\begin{bmatrix}1\\0\\0\\1\end{bmatrix},~~
\alpha^{(31)}_4=\left[\!\!\begin{array}{r}1\\i\\1\\-i\end{array}\!\!\right],~~
\alpha^{(31)}_5=\begin{bmatrix}1\\1\\1\\1\end{bmatrix}~.
\end{split}
\end{equation}
We put
$$
g_{(31)}=s(\alpha^{(31)}_1)\circ s(\alpha^{(31)}_2)\circ s(\alpha^{(31)}_3)\circ s(\alpha^{(31)}_4)\circ s(\alpha^{(31)}_5)~.
$$
Its eigenvalues and eigenvectors are
\begin{equation}\nonumber
\begin{split}
\zeta&\text{ : }q_1^{(31)}=\frac{1}{\sqrt{2(2-\sqrt{2})(3+\sqrt{3})}}
\begin{bmatrix}
\frac{(\sqrt{2}-1)(\sqrt{3}+1)(1+i)}{2}\\
-\sqrt{2}+1\\
-\frac{(\sqrt{3}+1)(1+i)}{2}\\1\end{bmatrix},
\\
\zeta^5&\text{ : }q_2^{(31)}=\frac{1}{\sqrt{2(2+\sqrt{2})(3-\sqrt{3})}}
\begin{bmatrix}
\frac{(\sqrt{2}+1)(\sqrt{3}-1)(1+i)}{2}\\
\sqrt{2}+1\\
\frac{(\sqrt{3}-1)(1+i)}{2}\\1\end{bmatrix},
\\
\zeta^{13}&\text{ : }q_3^{(31)}=\frac{1}{\sqrt{2(2+\sqrt{2})(3+\sqrt{3})}}
\begin{bmatrix}
-\frac{(\sqrt{2}+1)(\sqrt{3}+1)(1+i)}{2}\\
\sqrt{2}+1\\
-\frac{(\sqrt{3}+1)(1+i)}{2}\\1\end{bmatrix},
\\
\zeta^{17}&\text{ : }q_4^{(31)}=\frac{1}{\sqrt{2(2-\sqrt{2})(3-\sqrt{3})}}
\begin{bmatrix}
-\frac{(\sqrt{2}-1)(\sqrt{3}-1)(1+i)}{2}\\
-\sqrt{2}+1\\
\frac{(\sqrt{3}+1)(1+i)}{2}\\1\end{bmatrix}.
\end{split}
\end{equation}
Then $(g_{(31)},\zeta,q_1^{(31)})$ is an admissible triplet of $G_{31}$. 
The coordinate system $z^{(31)}$ dual to $q_1^{(31)},\ldots,q_4^{(31)}$ is a $(g_{(31)},\zeta)$-graded coordinate system.
The set of basic invariants of $G_{31}$ which is good  with respect to  $(g_{(31)},\zeta,q_1^{(31)})$
and which is compatible with $z^{(31)}$ at $q_1^{(31)}$ is as follows.
\begin{equation}\label{good31}
\begin{split}
x_1^{(31)}&=\frac{1}{6}\left(F_{24}-\frac{7}{12}F_{12}^2+\frac{5}{16}F_8^3\right),
\\
x_2^{(31)}&=\sqrt{\frac{3}{2}}\left(
F_{20}-\frac{11}{12}F_{12}F_8
\right),
\\
x_3^{(31)}&=-\frac{1}{2\sqrt{3}}F_{12},\\
x_4^{(31)}&=-\frac{1}{2\sqrt{2}}F_{8}.
\end{split}
\end{equation}
Here  $F_{24},F_{20},F_{12},F_8$ are those defined in \cite[eqs.(7)(11)(12)]{Maschke1887}:
\begin{equation}\begin{split}
F_{24}&=\frac{1}{4}\Phi_1\Phi_2\Phi_3\Phi_4 \Phi_5\Phi_6,
\\
F_{20}&=\frac{1}{12}\sum_{1\leq i<j<k<l<\leq 6} \Phi_i\Phi_j \Phi_k \Phi_l \Phi_m~,
\\
F_{12}&=-\frac{1}{4}\sum_{1\leq i<j<k\leq 6}\Phi_i\Phi_j\Phi_k,\\
F_8&=-\frac{1}{6}\sum_{1\leq i<j\leq  6}\Phi_i\Phi_j~,
\end{split}
\end{equation}
where
\begin{equation}\nonumber
\begin{split}
\Phi_1&=u_1^4+u_2^4+u_3^4+u_4^4-6(
u_1^2u_2^2+u_1^2u_3^2+u_1^2u_4^2+u_2^2u_3^2+u_2^2u_4^2+u_3^2u_4^2
),
\\
\Phi_2&=u_1^4+u_2^4+u_3^4+u_4^4+6(
-u_1^2u_2^2+u_1^2u_3^2+u_1^2u_4^2+u_2^2u_3^2+u_2^2u_4^2-u_3^2u_4^2
),
\\
\Phi_3&=u_1^4+u_2^4+u_3^4+u_4^4+6(
u_1^2u_2^2-u_1^2u_3^2+u_1^2u_4^2+u_2^2u_3^2-u_2^2u_4^2+u_3^2u_4^2
),
\\
\Phi_4&=u_1^4+u_2^4+u_3^4+u_4^4+6(
u_1^2u_2^2+u_1^2u_3^2-u_1^2u_4^2-u_2^2u_3^2+u_2^2u_4^2+u_3^2u_4^2
),
\\
\Phi_5&=-2(u_1^4+u_2^4+u_3^4+u_4^4)-24u_1u_2u_3u_4,\\
\Phi_6&=-2(u_1^4+u_2^4+u_3^4+u_4^4)+24u_1u_2u_3u_4.
\end{split}
\end{equation}

\begin{remark}
In the cases of duality groups,  
the potential vector field \eqref{eq:pvf} gives 
the structure constants  $C_{\alpha\beta}^{\gamma}=\frac{\partial^2 \mathcal{G}_{\gamma}}{\partial x_{\alpha}\partial x_{\beta}}$ ($1\leq \alpha,\beta,\gamma\leq n$) of a multiplication
on the tangent bundle of the orbit space,
and 
the multiplication is commutative and associative \cite{KMS2018}\cite{KM2023}.
Although $G_{31}$ is not a duality group,
we can compute \eqref{eq:pvf} and obtain the followings.
\begin{equation}\nonumber
\begin{split}
\mathcal{G}_1^{(31)}&=
\frac{x_1^2}{2}+\frac{35}{108}x_3^4-\frac{x_2^2x_4}{3\sqrt{2}}-\frac{x_2x_3x_4^2}{\sqrt{2}}
-\frac{8\sqrt{2}}{3}x_3^2x_4^3+\frac{11}{12}x_4^6,
\\
\mathcal{G}_2^{(31)}&=
x_1x_2-\frac{2}{3}x_2x_3^2+\frac{11}{9}x_3^3x_4+\frac{x_2x_4^3}{\sqrt{2}}-\frac{7}{2\sqrt{2}}x_3x_4^4,
\\
\mathcal{G}_3^{(31)}&=x_1x_3+\frac{x_3^3}{9}-\frac{x_2x_4^2}{\sqrt{2}}-\frac{3}{\sqrt{2}}x_3x_4^3,
\\
\mathcal{G}_4^{(31)}&=x_1x_4+\frac{x_2x_3}{3}+x_3^2x_4-\frac{x_4^4}{2\sqrt{2}}.
\end{split}
\end{equation}
Here we write $x_{\alpha}$ instead of $x_{\alpha}^{(35)}$.
However,  it turns out that the induced  multiplication is not associative.

\end{remark}
%%%%%%%%%%%%%%%%%%%%%%%%%%%%%%
\subsection{Admissible triplet, good basic invariants, 
and the potential vector field of $G_9$}
%%%%%%%%%%%%%%%%%%%%%%%%%%%%%%
$G_9$ is a duality group of rank $2$, with degrees $24,8$.
It is generated by reflections \cite[\S 6.3]{LehrerTaylor}
\begin{equation}\nonumber
r_3=\frac{1}{2}\left[\!\!\begin{array}{rr}1&-1\\-1&-1\end{array}\!\!\right],\quad
r_4=\begin{bmatrix}1&0\\0&i\end{bmatrix}~
\end{equation}
of orders $2$ and $4$.
We put $$
g_{(9)}=r_3r_4~.
$$
Its eigenvalues and eigenvectors are as follows.
\begin{equation}\nonumber
\zeta:~~q_1^{(9)}=\frac{1}{\sqrt{3+\sqrt{3}}}
\begin{bmatrix}-\frac{(\sqrt{3}+1)(1+i)}{2}\\1\end{bmatrix}
,\quad
\zeta^{17}:~~q_2^{(9)}=\frac{1}{\sqrt{3-\sqrt{3}}}
\begin{bmatrix}\frac{(\sqrt{3}-1)(1+i)}{2}\\1\end{bmatrix}
\end{equation}
The vector $q_1^{(9)}$ is a regular vector.
Therefore $(g_{(9)},\zeta,q_1^{(9)})$ is an admissible triplet of $G_{9}$ and 
the coordinate system  dual to the basis $q_1^{(9)},q_2^{(9)}$ is a $(g_{(9)},\zeta)$-graded coordinate system.
The set of basic invariants of $G_9$ which is good with respect to $(g_{(9)},\zeta,q_1^{(9)})$
and which is compatible with the $(g_{(9)},\zeta)$-graded coordinate system at $q_1^{(9)}$ is 
\begin{equation}\label{good9}
x_1^{(9)}=\frac{9}{128}\left(t_{O}^2-\frac{11}{16}h_{O}^3\right),\quad
x_2^{(9)}=-\frac{3}{8\sqrt{2}}h_{O}~.
\end{equation}
The potential vector field is
\begin{equation}\label{pvf9}
\mathcal{G}_1^{(9)}=\frac{x_1^2}{2}+\frac{11}{12}x_2^6,\quad 
\mathcal{G}_2^{(9)}=x_1x_2-\frac{x_2^4}{2\sqrt{2}}~.
\end{equation}
In the above,  we write $x_{\alpha}$ for $x_{\alpha}^{(9)}$ for simplicity.
These results agree with \cite{Arsie-Lorenzoni2016} and
\cite[Table C7]{KMS2018}.

%%%%%%%%%%%%%%%%%%%%%%%%%%%%%%
\subsection{Admissible triplet, good basic invariants, 
and the potential vector field of $G_{10}$}
%%%%%%%%%%%%%%%%%%%%%%%%

$G_{10}$ is a duality group of rank $2$, with degrees $24,12$.
It is generated by reflections \cite[\S 6.3]{LehrerTaylor}
$$r_1=\frac{\omega}{2}\left[\!\!\begin{array}{rr}
-1-i&1-i\\
-1-i&-1+i
\end{array}\!\!\right]
~,\quad
r_4'=\frac{1}{2}\left[\!\!\begin{array}{rr}
1+i&-1+i\\-1+i&1+i
\end{array}\!\!\right]
$$
of orders $3$ and $4$.
We put $$
g_{(10)}=(r_1r_4')^5~.
$$
Its eigenvalues and eigenvectors are as follows.
\begin{equation}\nonumber
\zeta:~~q_1^{(10)}=\frac{1}{\sqrt{2}}
\begin{bmatrix}\frac{(1+i)}{\sqrt{2}}\\1\end{bmatrix},
\quad
\zeta^{13}:~~q_2^{(10)}=\frac{1}{\sqrt{2}}
\begin{bmatrix}-\frac{(1+i)}{\sqrt{2}}\\1\end{bmatrix}
\end{equation}
The vector $q_1^{(10)}$ is a regular vector.
Therefore $(g_{(10)},\zeta,q_1^{(10)})$ is an admissible triplet of $G_{10}$ and 
the coordinate system  dual to the basis $q_1^{(10)},q_2^{(10)}$ is a $(g_{(10)},\zeta)$-graded coordinate system.
The set of basic invariants of $G_{10}$ which is good with respect to $(g_{(10)},\zeta,q_1^{(10)})$
and which is compatible with the $(g_{(10)},\zeta)$-graded coordinate system at $q_1^{(10)}$ is 
\begin{equation}\label{good10}
x_1^{(10)}=-\frac{8}{81}\left(h_{O}^3-\frac{7}{12}t_{O}^2\right),\quad
x_2^{(10)}=\frac{2}{9}t_{O}~.
\end{equation}
The potential vector field is
\begin{equation}\label{pvf10}
\mathcal{G}_1^{(10)}=\frac{x_1^2}{2}+\frac{35}{108}x_2^4
,\quad 
\mathcal{G}_2^{(10)}=x_1x_2-\frac{x_2^3}{9}~.
\end{equation}
Above, we omit the superscript from   $x_{\alpha}^{(10)}$ for simplicity.
These results agree with \cite{Arsie-Lorenzoni2016} and
\cite[Table C7]{KMS2018}.

%%%%%%%%%%%%%%%%%%%%%%%%%%%%%%%%%%
\subsection{The $8$-reflection subquotient $G_{31}$ is $G_9$}
%%%%%%%%%%%%%%%%%%%%%%%%%%%%%%%%%%
We consider the reflection subquotient of $G_{31}$ by $\delta=8$ with $8$-regular element
$g_{(31)}^3$.
Among the degrees of $G_{31}$, $d_1=24$ and $d_4=8$  are divisible by $8$.
Therefore the $\zeta^3$-eigenspace $V(g_{(31)}^3,\zeta^3)$ is spanned by $q_1^{(31)}$, $q_4^{(31)}$.
Let us consider the vector space isomorphism 
$V(g_{(31)}^3,\zeta^3)\to \mathbb{C}^2 $ given by
$$
q_1^{(31)}\mapsto q_1^{(9)},\quad q_4^{(31)}\mapsto q_2^{(9)}.
$$
Under this isomorphism, the reflection subquotient $G_{31}(8)$ maps into $GL_2(\mathbb{C})$
and we have
 \begin{equation}\label{map31-9}
\begin{split}
s(e_1-e_2+e_3+e_4)\circ s(e_1-e_4)&\mapsto r_3,\\
g_{(31)}&\mapsto r_3r_4~.
\end{split}
\end{equation}
This means that   there exists an element of $G_{31}(8)$ which is mapped to
 each generator of $G_9$ and hence that  $G_9$ is contained in the image of the map.
 Since the degrees and hence the orders of  $G_{31}(8)$ and $G_9$ are the same,
 the map gives an isomorphism between $G_{31}(8)\cong G_9$.

Under the isomorphism \eqref{map31-9}, the good basic invariants are mapped as follows:
$$
x_1^{(31)}\to x_1^{(9)},\quad
x_4^{(31)}\to x_2^{(9)},\quad 
x_2^{(31)},x_3^{(31)}\to 0.
$$

%%%%%%%%%%%%%%%%%%%%%%%%%%%%%%%%%%%%
\subsection{The $12$-reflection subquotient of $G_{31}$ is $G_{10}$}
%%%%%%%%%%%%%%%%%%%%%%%%%%%%%%%%%%%%
Finally consider the reflection subquotient of $G_{31}$ by $\delta=12$ with  the
$12$-regular element $g_{(31)}^2$.
The degrees of $G_{31}$ divisible by $\delta=12$ are $d_1=24$ and $d_3=12$.
Therefore the  $\zeta^2$-eigenspace of $V(g_{(31)}^2,\zeta^2)$ is spanned by $q_1^{(31)}$, $q_3^{(31)}$. 
Under the isomorphism of the vector spaces $V(g_{(31)}^2,\zeta^2)\to \mathbb{C}^2$ given by
$$
q_1^{(31)}\mapsto q_1^{(10)},\quad q_3^{(31)}\mapsto i q_2^{(10)},
$$
the reflection subquotient $G_{31}(12)$ maps onto $G_{10}$ since
\begin{equation}\label{map31-10}
\begin{split}
s(e_1-e_2+ie_3+ie_4)\circ s(e_1+ie_2+e_3-ie_4)&\mapsto r_1,\\
g_{(31)}^5&\mapsto  r_1r_4'~.
\end{split}
\end{equation}
Under the isomorphism \eqref{map31-10}, the good basic invariants are mapped as follows:
$$
x_1^{(31)}\to x_1^{(10)},\quad
x_3^{(31)}\to -ix_2^{(10)},\quad 
x_2^{(31)},x_4^{(31)}\to 0.
$$

%%%%%%%%%%%%%%%%

\end{document}